\newcommand{\abs}[1]{\lvert#1\rvert}
\newcommand{\norm}[1]{\lVert#1\rVert}
\newcommand{\paren}[1]{\left(#1\right)}
\newcommand{\bracket}[1]{\left[#1\right]}
\newcommand{\set}[1]{\left\{#1\right\}}
\newcommand{\bigo}{\mathrm{O}}
\newcommand{\eps}{\epsilon}
\newcommand{\dx}{\partial_{x} }
\newcommand{\dt}{\partial_{t}}
\newcommand{\R}{\mathbb{R}}
\newtheorem{thm}{Theorem}
\newtheorem{prop}{Proposition}[section]
\newtheorem{cor}[prop]{Corollary}
\numberwithin{equation}{section}
\numberwithin{thm}{section}
\title[Toy Model Dynamics for NLS on $\mathbb{T}^2$]{Behavior of a Model
  Dynamical System with Applications to Weak Turbulence} 
\author[J. E. Colliander]{James E. Colliander}
\thanks{J.E.C. is partially supported by NSERC through grant number
  RGPIN 250233-12.} 
\address{Department of Mathematics, University of Toronto}
\author[J.L. Marzuola]{Jeremy L. Marzuola}
\thanks{J.L.M. thanks the University of Toronto for hosting him during
  the beginning of this work, during which time he was partially
  supported by an NSF Postdoc Fellowship.  Towards the end of this
  research, JLM was supported by a combination of an IBM Junior
  Faculty Development Award through the University of North Carolina
  and a Guest Lectureship at Universit\"at Bielefeld.} 
\address{Department of Mathematics, University of North Carolina,
  Chapel Hill} 
\author[T. Oh]{Tadahiro Oh}
\address{Department of Mathematics, Princeton University}
\author[G. Simpson]{Gideon Simpson}
 \thanks{G.S. was supported by NSERC.  His contribution to this work
   was completed under the NSF PIRE grant OISE-0967140 and the DOE
   grant DE-SC0002085.} 
\address{School of Mathematics, University of Minnesota}
\begin{document}

\begin{abstract}
  We experimentally explore solutions to a model Hamiltonian dynamical
  system recently derived to
  study frequency cascades in the cubic defocusing nonlinear
  Schr\"odinger equation on the torus.  Our results include a
  statistical analysis of the evolution of data with localized
  amplitudes and random phases, which supports the conjecture that
  energy cascades are a generic phenomenon.  We also identify
  stationary solutions, periodic solutions in an associated problem
  and find experimental evidence of hyperbolic behavior.  Many of our
  results rely upon reframing the dynamical system using a
  hydrodynamic formulation.
\end{abstract}

\maketitle

\tableofcontents

\section{Introduction}
\label{s:intro}

Recent investigations in \cite{CKSTT} reduced the study of the
nonlinear Schr\"odinger equation (NLS),
\begin{eqnarray}
  \label{e:dcnls}
  i u_t + \Delta u - |u|^2 u = 0, \ \ u(0,x) = u_0(x) \ \text{for} \ x \in \mathbb{T}^2,
\end{eqnarray}
to the ``Toy Model'' dynamical system given by the equation
\begin{equation}
  \label{e:toy_model}
  -i\dt b_j (t) = -\abs{b_j(t)}^2 b_j(t) + 2 b_{j-1}^2 \overline{b_j}(t)
  + 2 b_{j+1}^2 \overline{b_j}(t)
\end{equation}
for $j = 1,\ldots, N$, with boundary conditions
\begin{equation}
  \label{e:dirichletbc}
  b_0(t) = b_{N+1}(t) = 0.
\end{equation}
The $b_j$'s approximate the energy of families of resonantly interacting
frequencies to be described below. The main purpose of this paper is
to study the evolution equation \eqref{e:toy_model}, both to gain
additional insight into \eqref{e:dcnls} and for its own sake.

In addition to showing how \eqref{e:toy_model} approximates
\eqref{e:dcnls}, a key result of \cite{CKSTT} is the construction of
a solution to \eqref{e:toy_model} which transfers mass from low index
$j$ to high $j$.  In the
underlying NLS problem, this implies  there exist arbitrarily
large, but finite, energy cascades.  Thus, \cite{CKSTT} 
showed that Hamiltonian dispersive equations posed on tori can have
``weakly turbulent dynamics,'' the phenomenon by which arbitrarily
high index Sobolev norms can grow to be arbitrarily large in finite
time.

The question of energy cascades in infinite dimensional dynamical
systems was considered by Bourgain \cite{B04}, who asked if
there was a solution to \eqref{e:dcnls} with an initial condition $u_0
\in H^s$, $s > 1$, such that
\begin{equation} \label{e1} \limsup_{t \to \infty} \|u(t)\|_{H^s} =
  \infty.
\end{equation}
This corresponds to a weakly turbulent dynamic, as there is growth in
high Sobolev norms, but no finite time singularity.  Indeed, since
\eqref{e:dcnls} is defocusing  it has a bounded $H^1$ norm.  One can
view this behavior as an ``infinite-time blowup.''

Although the result in \cite{CKSTT} does not answer Bourgain's
question, it makes significant progress.  The result says that given a
threshold $K\gg1$ and $\delta >0$ there exists $u_0 \in H^s$ with $\|
u_0\|_{H^s} \leq \delta$ and $T>0$ such that $\|u(T)\|_{H^s} \geq K$,
where $u$ is the solution to the NLS with $u(0) = u_0$.  This
establishes
\begin{equation}
  \label{e2} 
  \inf_{\delta > 0} \bigg\{ \limsup_{t \to
    \infty} \Big(\sup_{\|u_0\|_{H^s} \leq \delta }\|u(t)\|_{H^s}\Big)
  \bigg\} = \infty,
\end{equation}
but not \eqref{e1}.  This is one of the first rigorous
result exhibiting the shift of energy from low to high frequencies for
a nonlinear Hamiltonian PDE viewed as an infinite-dimensional
Hamiltonian dynamical system, see also work by Kuksin \cite{Ku1}.  The
works Carles-Faou \cite{Carles:2012jv}, Hani \cite{H11}, and
Guardia-Kaloshin \cite{GK12} have also recently treated
\eqref{e:dcnls}. A particular achievement of these newer works
is their careful construction of error estimates on the non-resonant terms.

The dynamics in \cite{CKSTT} were not shown to be generic.  Rather,
the authors constructed a single solution with the desired properties.  The
stability of this solution to the flow \eqref{e:toy_model} is unknown.
One purpose of this note is to explore this 
question of ``genericity'', by investigating ensembles of data for
\eqref{e:toy_model}, and finding that, on average, there is a 
transfer of energy from low to high indices.

In addition to this statistical study, we seek out other interesting
dynamics in \eqref{e:toy_model}.  Notable behaviors we found include:
\begin{itemize}
\item Compactly supported, time harmonic, structures;
\item Spatially and temporally periodic solutions subject to the
  adoption of periodic boundary conditions,
  \begin{equation}
    \label{e:periodicbc}
    b_0(t) = b_N(t), \quad b_{N+1}(t) = b_1(t);
  \end{equation}
\item Nonlinear hyperbolic behavior with both rarefactive waves and
  dispersive shock waves.
\end{itemize}

Many of these solutions are obtained by going to the hydrodynamic
formulation of the problem.  Making the Madelung transformation,
\begin{equation}
  b_j(t) = \sqrt{\rho_j(t)}\exp(i\phi_j(t))
\end{equation}
with $\rho_j \geq 0$ and $\phi_j \in \mathbb{R}$, we obtain evolution
equations for $\rho_j$ and $\phi_j$:
\begin{subequations}\label{e:toy_model_hydro}
  \begin{align}
    \dot\phi_j & = -\rho_j + 2 \rho_{j-1} \cos\bracket{2(\phi_{j-1}-\phi_j)} + 2 \rho_{j+1} \cos\bracket{2(\phi_{j+1}-\phi_j)} , \\
    \dot\rho_j & = -4 \rho_j \rho_{j-1}
    \sin\bracket{2(\phi_{j-1}-\phi_j)} -4 \rho_j \rho_{j+1}
    \sin\bracket{2(\phi_{j+1}-\phi_j)}.
  \end{align}
\end{subequations}
From this perspective, it is clear that phase interactions play a key
role in the dynamics.

\section{Properties of the Toy Model}
\label{s:properties}

In this section, we briefly review the connection between
\eqref{e:dcnls} and \eqref{e:toy_model}, and review some important
structural properties of \eqref{e:toy_model}.

\subsection{Relationship to NLS}
First, we summarize the argument from \cite{CKSTT} which relates NLS
to the Toy Model.  This begins by studying NLS in Fourier space,
\begin{equation*}
  u(t,x) = \sum_{n \in \mathbb{Z}^2} a_n (t) e^{i n \cdot x + |n|^2 t}.
\end{equation*}
After a choice of gauge eliminating certain trivial interactions, the
Fourier amplitudes $\{a_n\}$ are seen to evolve according to
\begin{equation}
  \label{e:fnls}
  -i \partial_t a_n = -a_n |a_n|^2 + \sum_{ n_1, n_2, n_3 \in \Gamma (n)} a_{n_1} \bar{a}_{n_2} a_{n_3} e^{i \omega_4 t},
\end{equation}
where
\begin{gather*}
  \omega_4 = |n_1|^2 -|n_2|^2+|n_3|^2-|n|^2,\\
  \Gamma (n) = \left\{ (n_1,n_2,n_3) \in (\mathbb{Z}^2)^3 | n_1 - n_2
    + n_3 = n, \ n_1 \neq n, \ n_3 \neq n \right\}.
\end{gather*}
For any $n$, the most significant contributions in the summation will
be the elements of $\Gamma(n)$ belonging to the resonant set,
\begin{equation*}
  \Gamma_{\rm res} (n) = \left\{ (n_1,n_2,n_3) \in \Gamma (n) \mid    |n_1|^2
    - |n_2|^2 + |n_3|^2 - |n|^2 = 0  \right\}.
\end{equation*}
Restricting \eqref{e:fnls} to the resonant modes, we have
\begin{equation}
  \label{e:resfnls}
  -i \partial_t r_n = -r_n |r_n|^2 + \sum_{ n_1, n_2, n_3 \in
    \Gamma_{\rm res} (n)} r_{n_1} \bar{r}_{n_2} r_{n_3}.
\end{equation}
A union of disjoint sets, $\Lambda_j$, of resonantly interacting
frequencies is constructed,
\begin{equation*}
  \boldsymbol{\Lambda} = \Lambda_1 \cup \Lambda_2 \cup \cdots \cup \Lambda_N,
\end{equation*}
where the mass from modes in generation $\Lambda_j$, $r_{n_1}$ and
$r_{n_3}$, can mix to transfer mass to modes $r_{n}$ and $r_{n_2}$ in
generation $\Lambda_{j+1}$, where again $n_1-n_2+n_3=n$.
Subject to certain additional conditions, we will
have that for all $t$ and $j$,
\begin{equation*}
  r_n(t) = r_{n'}(t),\quad \forall \  n, n' \in \Lambda_j.
\end{equation*}
Once these sets have been constructed, a nontrivial step, the
relationship between the toy model and \eqref{e:resfnls} is
\begin{equation}
  b_j(t) = r_n(t), \quad  \forall \ n \in \Lambda_j.
\end{equation} 
Hence, $\abs{b_j(t)}^2$ is a measure of the spectral energy density of
generation $\Lambda_j$.

To show that NLS has an energy cascade, the authors used ideas
inspired from studies of Arnold diffusion (see \cite{Arnold}) and
explicit ODE manipulations to show that the Toy Model admits an
instability mechanism transferring mass from a low index node to a
high index node. By the construction of the initial data set $
\boldsymbol{\Lambda}$, such a mass transfer in the Toy Model yielded
growth of high Sobolev norms of the solution to the resonant system
\eqref{e:resfnls}, which in turn implied an energy cascade for NLS via
a stationary phase argument.

\subsection{Structural Properties}

We recall some of the results from Section 3 of \cite{CKSTT}.  The toy
model is a Hamiltonian dynamical system with Hamiltonian given by
\begin{equation}
  \label{e:hamiltonian}
  H[{\bf b} = (b_1, b_2,\ldots, b_N)] = \sum_{j=1}^N \left( \frac14 | b_j |^4 - \text{Re} ( \bar{b}_j^2 b_{j-1}^2) \right),
\end{equation}
and symplectic structure,
\begin{equation}
  \label{e:symplectic}
  i \frac{db_j}{dt} = 2 \frac{\partial H[{\bf b}]}{\partial \bar b_j},
  \quad j = 1,\ldots N.
\end{equation}
This structure applies to both the original Dirichlet boundary
conditions, \eqref{e:dirichletbc}, and the periodic boundary
conditions, \eqref{e:periodicbc}, studied below.

The Toy Model, \eqref{e:toy_model}, admits many of the symmetries of
\eqref{e:dcnls}, including phase invariance, scaling, time translation
and time reversal.  However, many of these symmetries are redundant,
and the only known invariant, other than \eqref{e:hamiltonian}, is the
mass quantity,
\begin{equation}
  \label{e:mass}
  M[{\bf b} ]=  \sum_{j=1}^N |b_j|^2.
\end{equation}

These invariants are useful in assessing the performance of our
numerical schemes.  A robust algorithm should preserve them within
a controllable error.

Since \eqref{e:toy_model} is a finite-dimensional
Hamiltonian system, the behavior can studied
statistically.  By Liouville's theorem, the Lebesgue measure
 $$\prod_{j = 1}^N d b_j = \prod_{j = 1}^N d\,  \text{Re}\, b_j d\,  \text{Im}\, b_j$$ 
 on $\R^{2N}$ is invariant under the dynamics of (2).  Moreover, in
 view of the mass conservation, the white noise
 \begin{align*}
   d \mu_N & = Z_N^{-1} e^{-\frac{1}{2} \sum_{j = 1}^N |b_j|^2} \prod_{j = 1}^N d b_j \\
   & = (2\pi)^{-N} \prod_{j = 1}^N e^{-\frac{1}{2} (\text{Re}\,b_j)^2
     + (\text{Im}\,b_j)^2} d\, \text{Re}\, b_j d\, \text{Im}\, b_j
 \end{align*}
 is an invariant probability measure for \eqref{e:toy_model}.  In
 particular, the Poincar\'e recurrence theorem (see, for example, p.106 of \cite{Z01}) ensures
 that almost every point $\bf b$ in the phase space is Poisson stable.
 That is to say, there exists $\{t_n\}_{n = 1}^\infty$ tending to $\infty$
 (and another sequence tending to $-\infty$) such that
 $$\lim_{n\to \infty} {\bf b}(t_n) = {\bf b}$$ 
 where ${\bf b}(t)$ is the solution to \eqref{e:toy_model} with
 ${\bf b}(0) = {\bf b}$.
 
 Here, ``almost every'' is with respect to both the white noise $\mu$
 and the Lebesgue measure $\prod_{j = 1}^N d b_j$, since they are
 absolutely continuous with respect to one another.  Of course, this
 is only an ``almost every'' statement, but it does says that the
 solution of the toy model constructed in \cite{CKSTT} is destined to
 return to the original configuration.

 \section{Random Phase Interactions and Ensemble Dynamics}

 In this section, we present the results of running an ensemble of
 initial conditions.  The statistics of the results indicate that
 there is generic movement of mass from low to high nodes.

 For our first ensemble, we took as the initial conditions,
 \begin{equation}
   \label{e:ensemble_ic}
   b_j(\theta_j) = \begin{cases}
     \frac{\eps}{(N-1)} \exp \set{i \theta_j},& j \neq j_\star,\\
     \sqrt{1 - \eps^2}\exp \set{i \theta_j}, & j = j_\star.
   \end{cases}
 \end{equation}
 $\eps \in (0,1)$, $1< j_\star < N$ and $\theta_j$ are identical
 independently distributed random variables, $\theta_j \sim
 U(0,2\pi)$, the uniform distribution on $[0,2\pi)$.  Thus,
 \eqref{e:ensemble_ic} has mass one, with the majority of the mass
 concentrated at $j_\star$, and random phases on each node.

 To study the spreading of energy in this system, we introduce the
 Sobolev type norms, $h^s$, defined as
 \begin{equation}
   \label{eqn:hsnorm}
   \norm{{\bf b}}_{h^s}^2 = \sum_{j=1}^N j^{2 s} \abs{b_j}^2.
 \end{equation}
 We note that this norm will measure shift of mass to higher $j$
 indices, but is difficult to connect directly to the corresponding
 $H^s$ norm of a solution to \eqref{e:dcnls} on the torus since that
 requires specifying the placement function from \cite{CKSTT}.

 Another candidate is
 \begin{equation}
   \big(\sum_{j = 1}^N 2^{(s-1)j} |b_j|^2\big)^\frac{1}{2},
   \label{e3}
 \end{equation}
 which can be more closely connected to the construction in
 \cite{CKSTT}.  However, since we only simulate a finite number of
 generations, if one grows, the other must too.  Thus, we employ
 \eqref{eqn:hsnorm}.

 We now proceed with our simulations for \eqref{e:ensemble_ic}.
 Taking $\eps = .1$, $N=100$ and $j_\star = 10$, we simulated this
 initial condition until $t=10000$ with $10000$ realizations of the
 random phases.  Generic slow growth in Sobolev norms appears in
 Figure \ref{fig:rand}.  These were computed using the explicit
 Runge-Kutta Prince-Dormand $(8,9)$ method, with a relative error
 tolerance of $10^{-12}$ and an absolute tolerance of $10^{-14}$, see
 \cite{GSL}.  Over the entire ensemble, the maximum absolute and
 relative error in the invariants remained below $10^{-9}$.  In Figure
 \ref{f:singlerealization}, we show the evolution of a particular
 realization to show how this growth in norms occurs.  As the figure
 shows, there is a spreading of the mass away from the initial site of
 high mass.  Additionally, there is local exchange between sites.

\begin{figure}
  \subfigure[$s=1$]{\includegraphics[width=2.4in]{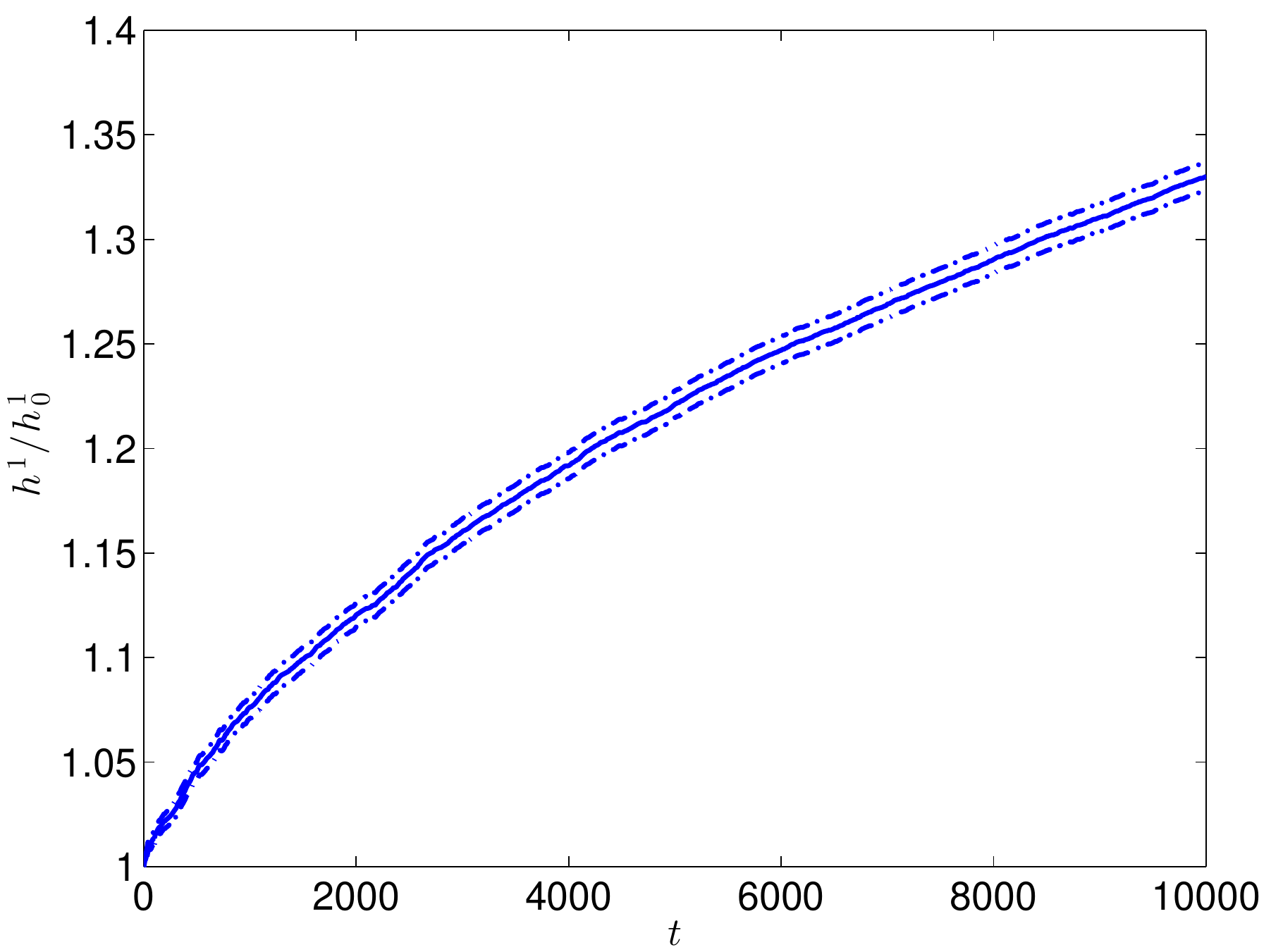}}
  \subfigure[$s=2$]{\includegraphics[width=2.4in]{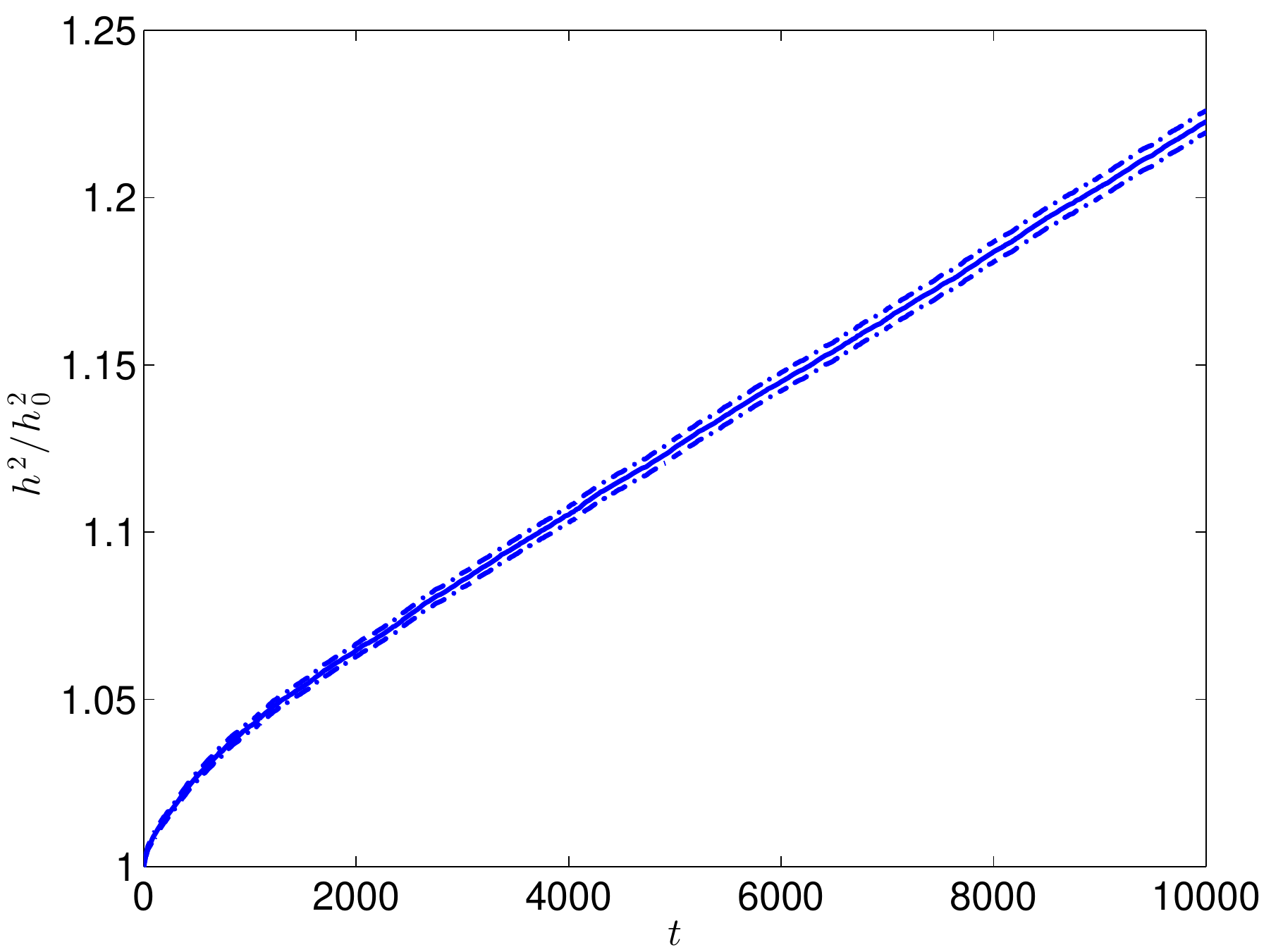}}

  \subfigure[$s=3$]{\includegraphics[width=2.4in]{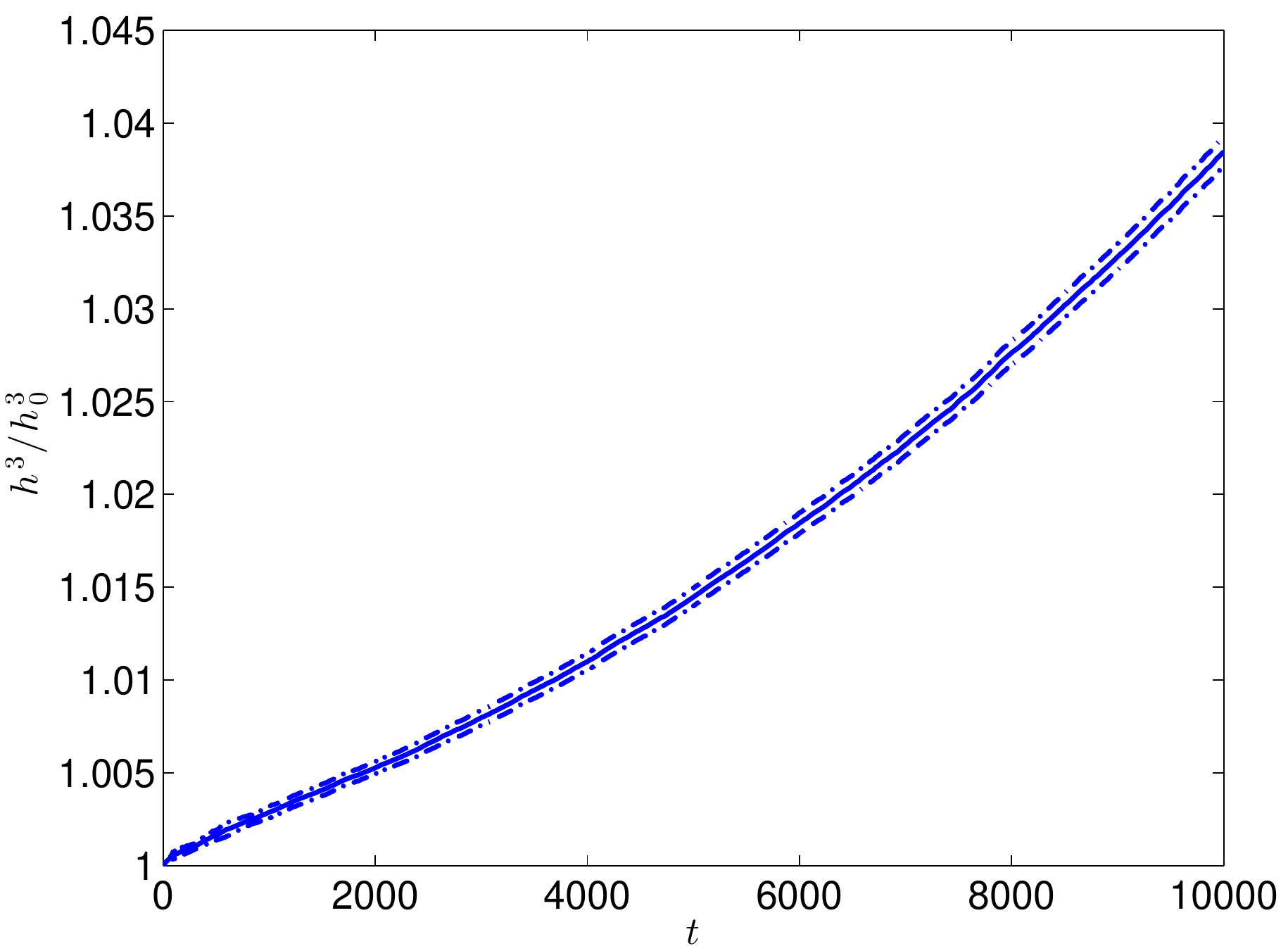}}
  \subfigure[$s=4$]{\includegraphics[width=2.4in]{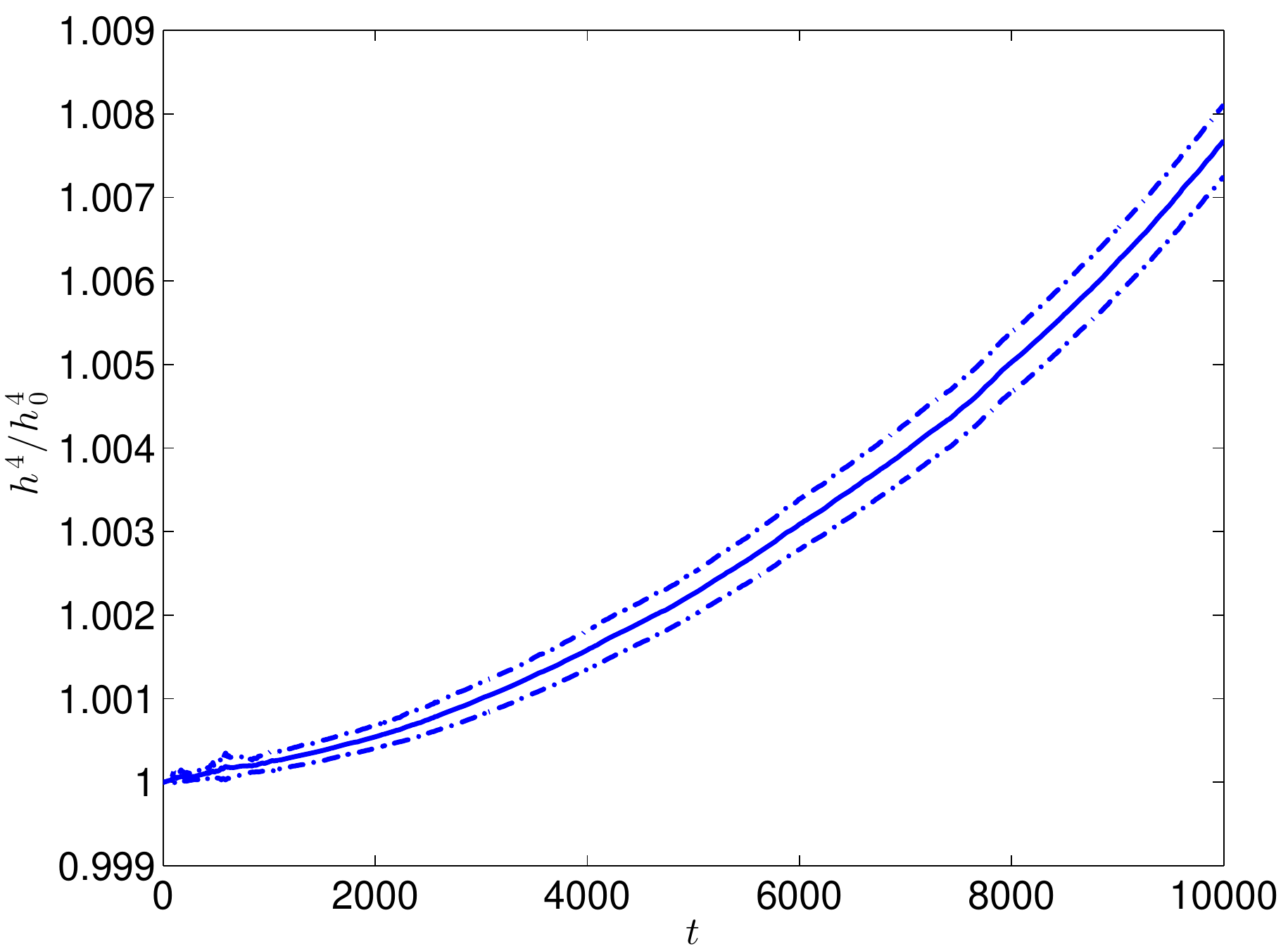}}
  \caption{Here we show ensemble averages for the evolution of
    normalized Sobolev norms for the initial conditions
    \eqref{e:ensemble_ic} with $\eps=.1$, $N=100$ and $j_\star = 10$.
    The ensemble size was $10000$ and the dashed lines correspond to
    95\% confidence intervals.}
  \label{fig:rand}
\end{figure}

\begin{figure}
  \includegraphics[width = 8cm]{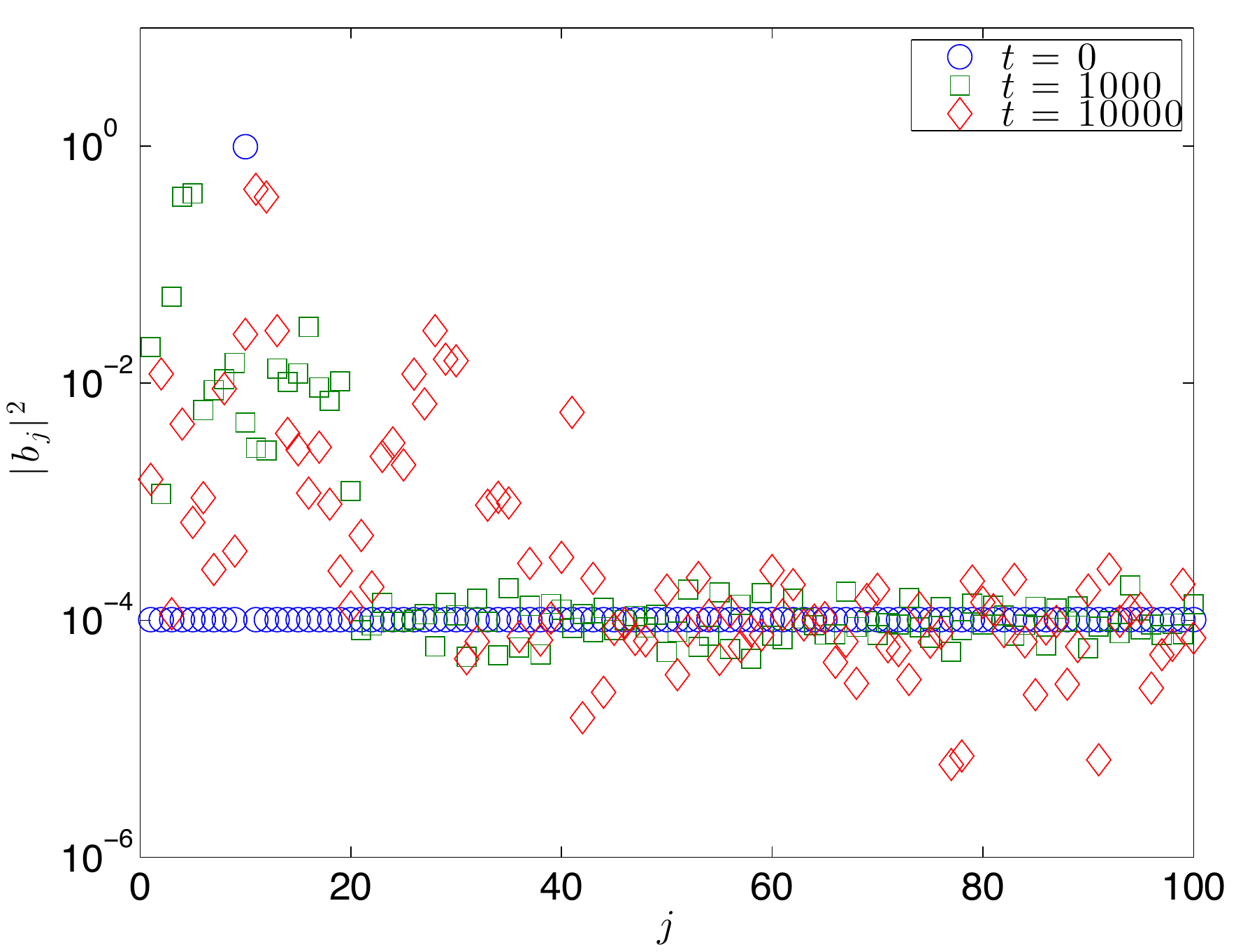}
  \caption{The evolution of a single realization from the ensemble
    with initial condition \eqref{e:ensemble_ic}.}
  \label{f:singlerealization}
\end{figure}

Of course, as $N\to \infty$, the nodes in \eqref{e:ensemble_ic}
decouple.  As an alternative, we consider
\begin{equation}
  \label{e:ensemble_weightedic}
  b_j(\theta_j) = \begin{cases}
    \frac{\eps}{j} \exp \set{i \theta_j},& j \neq j_\star,\\
    \frac{\sqrt{1 - \eps^2}}{j}\exp \set{i \theta_j}, & j = j_\star.
  \end{cases}
\end{equation}
The decay in $j$ ensures that as $N\to \infty$, the solution has
finite mass.  The results, plotted in Figure \ref{fig:randweighted},
are similar to the \eqref{e:ensemble_ic} case.  There is somewhat more
growth in $h^1$, but less growth in the other $h^s$ norms.

\begin{figure}
  \subfigure[$s=1$]{\includegraphics[width=2.4in]{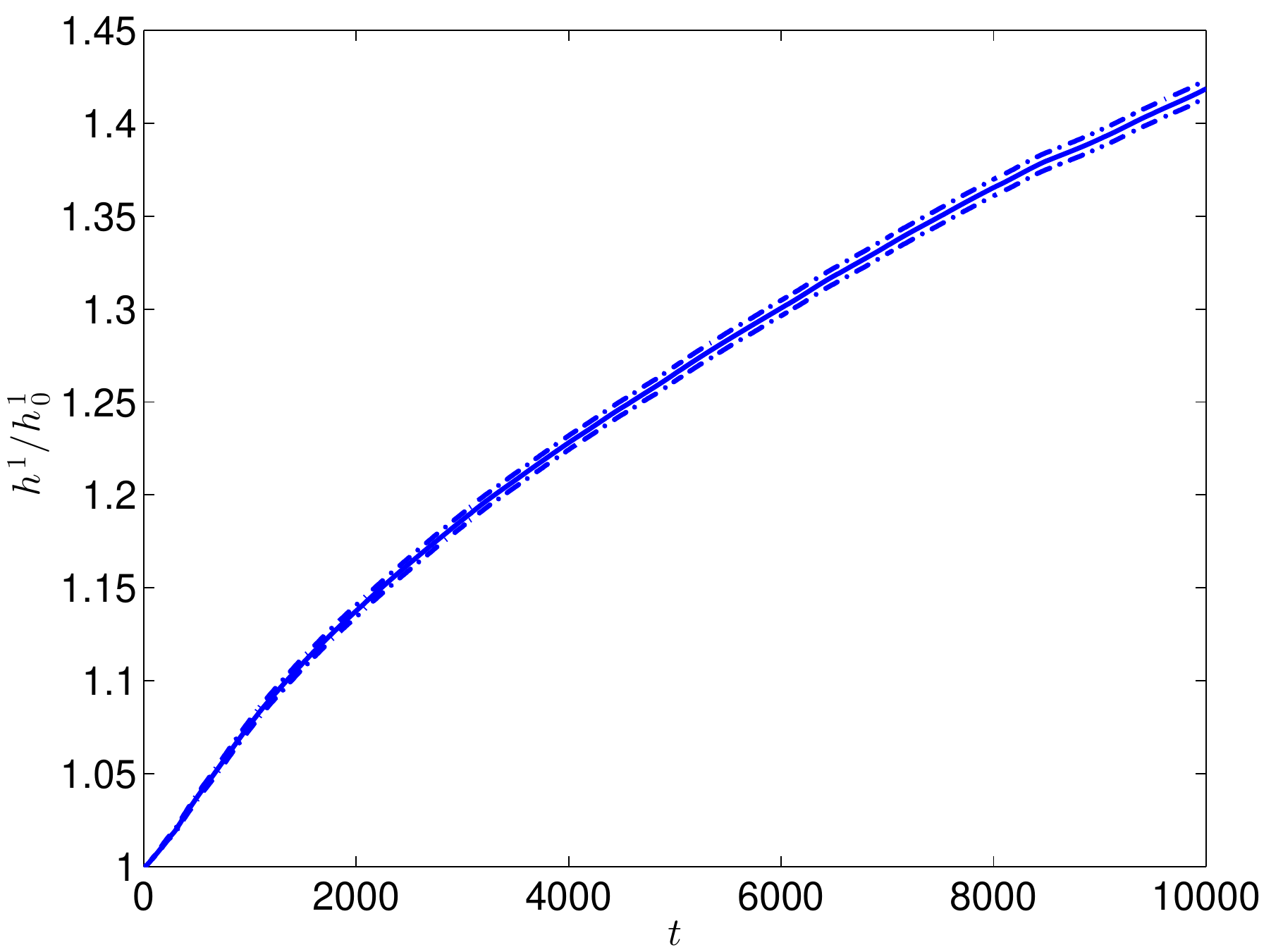}}
  \subfigure[$s=2$]{\includegraphics[width=2.4in]{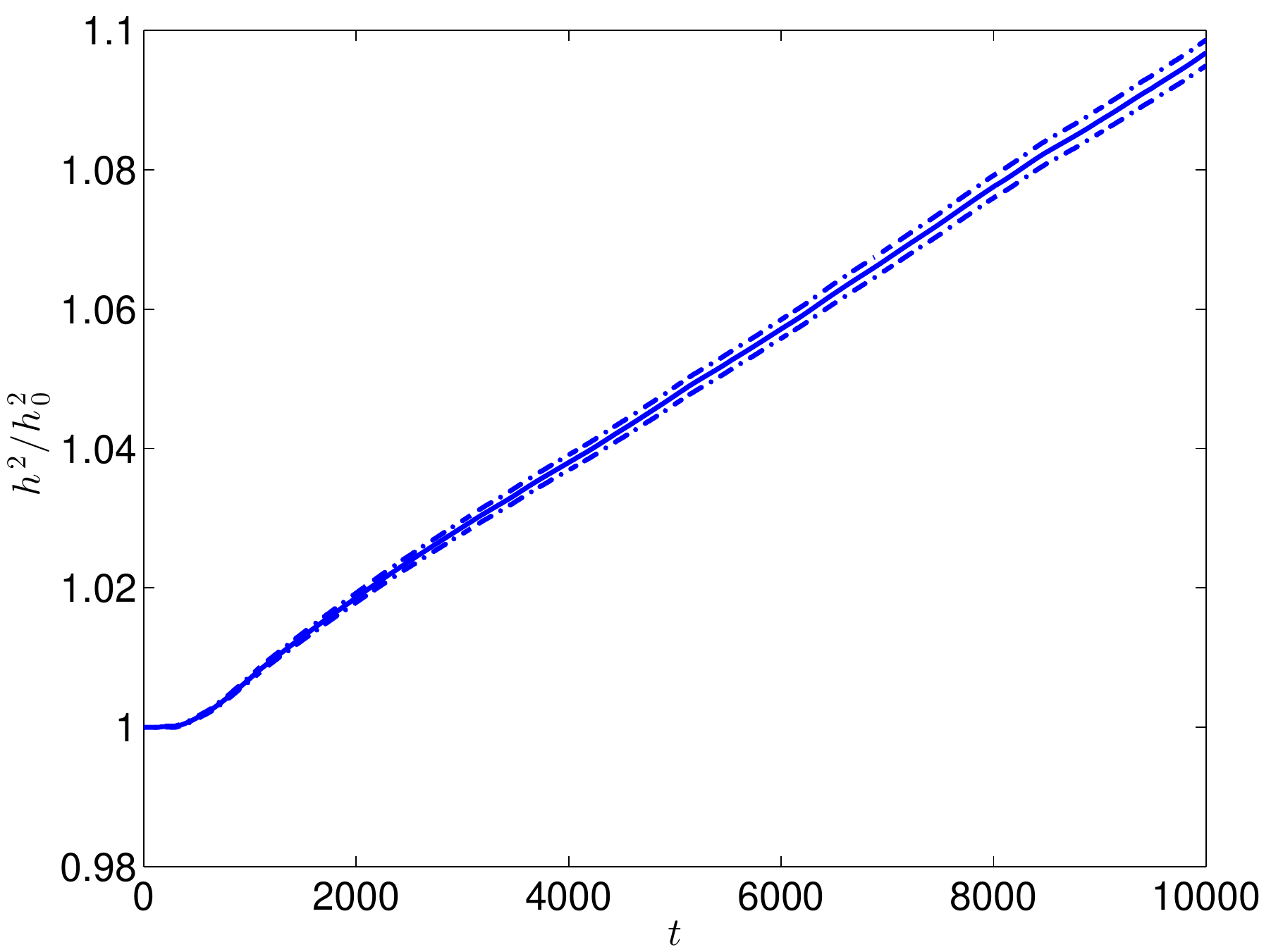}}

  \subfigure[$s=3$]{\includegraphics[width=2.4in]{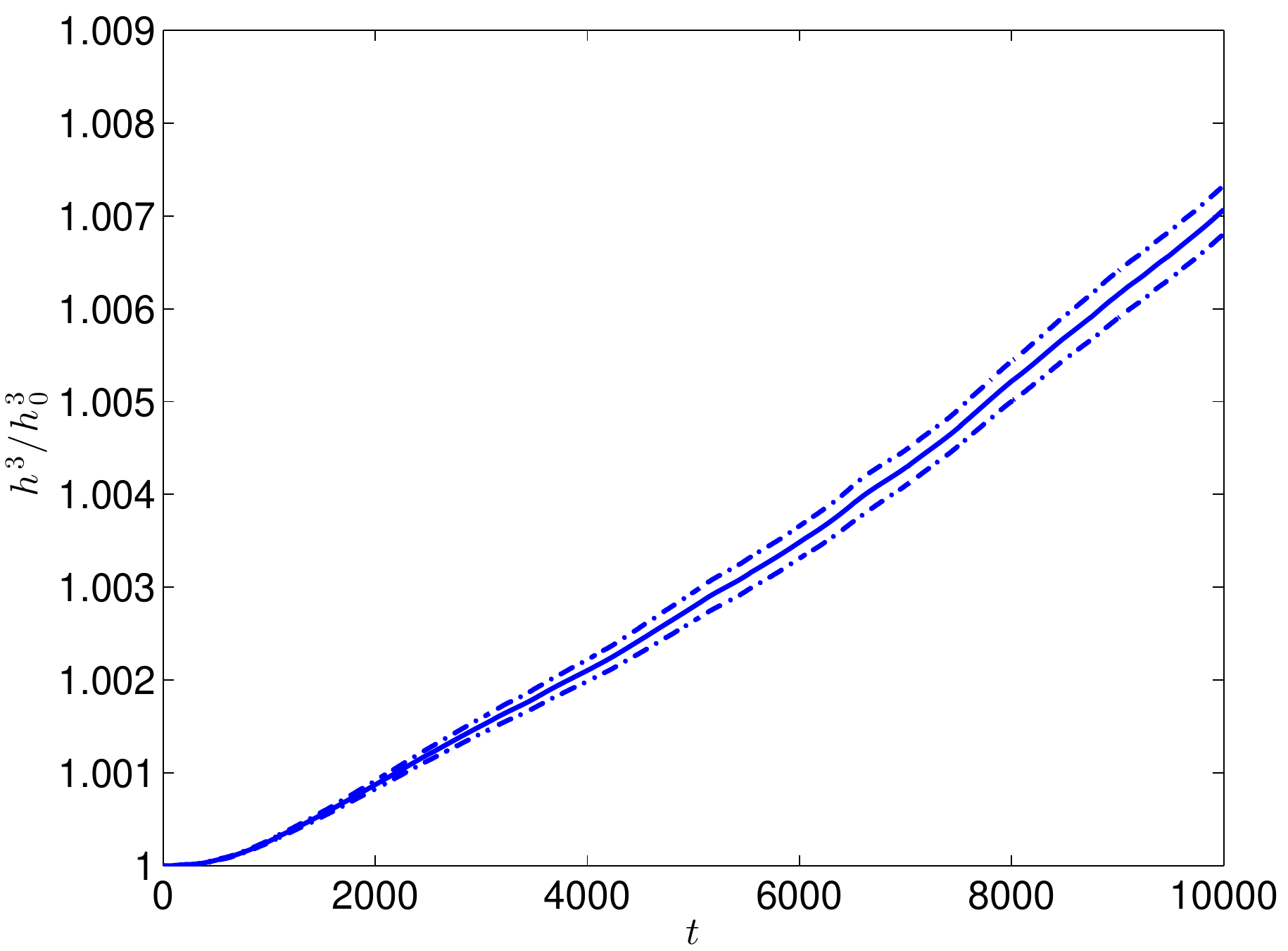}}
  \subfigure[$s=4$]{\includegraphics[width=2.4in]{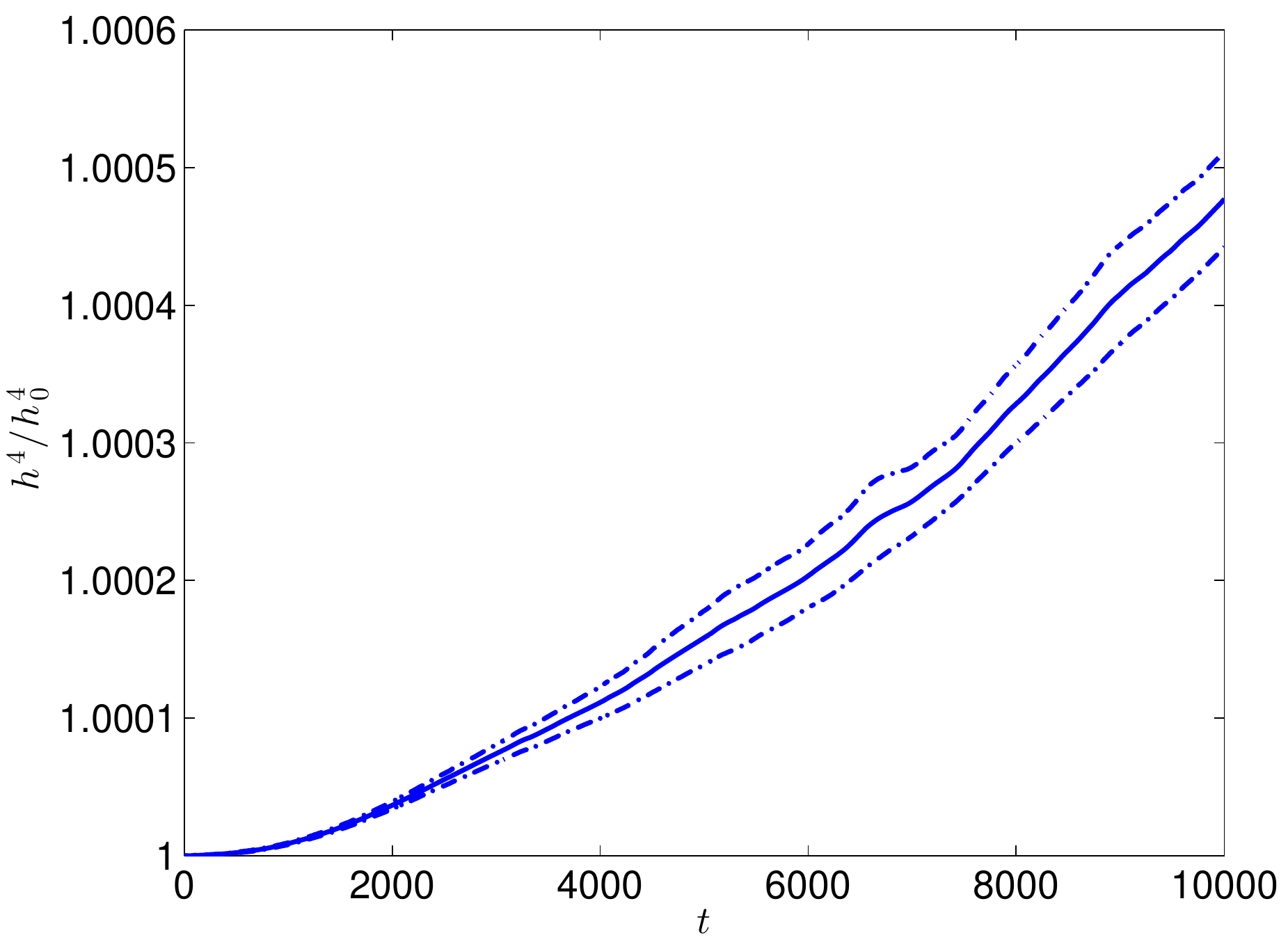}}
  \caption{Plots of normalized Sobolev norms for the initial condition
    \eqref{e:ensemble_weightedic} with $\eps=.1$, $N=100$ and $j_\star
    = 10$.  The ensemble size was 10000 and the dashed lines
    correspond to 95\% confidence intervals.}
  \label{fig:randweighted}
\end{figure}

\section{Particular Solutions}
In this section, we consider several particular solutions to
\eqref{e:toy_model}, including localized solutions, periodic solutions
and a ``hyperbolic'' solution.  These were motivated by the
hydrodynamic formulation of the problem, \eqref{e:toy_model_hydro}.

\subsection{Localized, Uniform Phase Solutions}
We first seek solutions which stay in phase for all time,
\begin{equation}
  \label{e:inphase}
  \phi_j(t) = \phi_{j+1}(t).
\end{equation}
Such solutions are said to be {\it phase locked}.  Assuming this
holds, \eqref{e:toy_model_hydro} becomes
\begin{subequations}
  \begin{align}
    \dot\phi_j & = -\rho_j + 2 \rho_{j-1} + 2 \rho_{j+1} , \\
    \dot\rho_j & = 0.
  \end{align}
\end{subequations}
We now need a solution to
\begin{equation}
  -\rho_j + 2 \rho_{j-1} + 2 \rho_{j+1}=\omega \in \R, \quad\text{for
    $j=1,\ldots N$},
\end{equation}
where $\omega$ is independent of $t$ and $\rho_0 = \rho_{N+1} = 0$.
This corresponds to the linear system
\begin{equation}
  \label{e:upmat}
  \begin{pmatrix}  
    -1 & 2 & 0 &\cdots & 0 \\
    2 & -1 & 2 & \cdots &0 \\
    \vdots & \vdots & \ddots &\ddots & \vdots\\
    0 & 0 & \cdots & 2 & -1
  \end{pmatrix} \boldsymbol{\rho} = \omega \boldsymbol{1}
\end{equation}
Though the matrix is tri-diagonal, it is not diagonally dominant, so
its solvability is not immediately clear.  However,
\begin{thm}
  The matrix in \eqref{e:upmat} has no kernel for any $N$.
\end{thm}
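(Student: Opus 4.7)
The plan is to recognize the matrix in \eqref{e:upmat} as a symmetric tridiagonal Toeplitz matrix, for which the spectrum is known explicitly, and then invoke a rationality argument to rule out zero as an eigenvalue.

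First I would reduce the question of a nontrivial kernel to a second-order linear recurrence. A vector $\boldsymbol{\rho}$ in the kernel satisfies
\begin{equation*}
    2\rho_{j+1} - \rho_j + 2\rho_{j-1} = 0, \qquad j = 1, \ldots, N,
\end{equation*}
together with the boundary conditions $\rho_0 = \rho_{N+1} = 0$. The characteristic polynomial $2r^2 - r + 2 = 0$ has roots $r_{\pm} = \tfrac{1}{4}(1 \pm i\sqrt{15})$, which satisfy $|r_{\pm}| = 1$ and can be written as $e^{\pm i \theta}$ with $\cos\theta = 1/4$. Imposing $\rho_0 = 0$ collapses the general solution to $\rho_j = B \sin(j\theta)$, and then the condition $\rho_{N+1} = 0$ forces either $B = 0$ or $\sin((N+1)\theta) = 0$.

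The heart of the argument is to rule out the latter, i.e., to show that $(N+1)\theta$ cannot be an integer multiple of $\pi$ for any $N \ge 1$. Equivalently, $\theta/\pi = \pi^{-1}\arccos(1/4)$ must be irrational. For this I would invoke Niven's theorem, which states that the only rational values of $\cos\theta$ for which $\theta$ is a rational multiple of $\pi$ are $0, \pm 1/2, \pm 1$. Since $1/4$ is not among these, $\theta/\pi$ is irrational and $(N+1)\theta$ is never a nonzero integer multiple of $\pi$. Hence $B = 0$ and $\boldsymbol{\rho} = 0$.

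As an alternative packaging, one can simply use the classical formula for the eigenvalues of a symmetric tridiagonal Toeplitz matrix with diagonal $a$ and off-diagonal $b$, namely $a + 2b\cos\!\bigl(k\pi/(N+1)\bigr)$ for $k = 1, \ldots, N$. Applied to \eqref{e:upmat} this gives the spectrum $\{-1 + 4\cos(k\pi/(N+1))\}_{k=1}^{N}$, so $0$ is an eigenvalue iff $\cos(k\pi/(N+1)) = 1/4$ for some integer $k$, and Niven's theorem again rules this out. The main (only) obstacle is the number-theoretic step: the rest of the argument is a direct computation of the spectrum, but the conclusion genuinely depends on knowing that $\arccos(1/4)$ is an irrational multiple of $\pi$.
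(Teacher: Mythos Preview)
Your argument is correct, but it takes a different route from the paper. The paper expands $\det A_N$ along the first row to obtain the recursion $\det A_N = -\det A_{N-1} - 4\det A_{N-2}$, observes that $\det A_1 = -1$ and $\det A_2 = -3$, and then notes by induction that $\det A_N$ is odd for every $N$, hence nonzero. This is a pure parity argument: it never identifies the spectrum and needs no outside input. Your approach instead computes the eigenvalues explicitly as $-1 + 4\cos\bigl(k\pi/(N+1)\bigr)$ (equivalently, reduces the kernel problem to the recurrence and the condition $\sin((N+1)\theta)=0$ with $\cos\theta = 1/4$), and then appeals to Niven's theorem to exclude $\cos\theta = 1/4$ at any rational multiple of $\pi$. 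What you gain is full spectral information and a method that transfers to any tridiagonal Toeplitz matrix whose diagonal ratio avoids Niven's exceptional set; what you pay is the dependence on a nontrivial number-theoretic fact. The paper's proof is more self-contained and arguably more elementary, since the oddness of $\det A_N$ follows from a two-line induction modulo $2$, whereas Niven's theorem, while standard, is not entirely trivial to prove from scratch.
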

\begin{proof}
  Letting $A_N$ denote this matrix, proving it has no kernel is
  equivalent to showing $\det A_N \neq 0$ for any $N$.  Indeed,
  \begin{equation}
    \label{e:matrecursion}
    \begin{split}
      \det A_{N} &= -1 \cdot \det A_{N-1} -2 \cdot
      \begin{vmatrix}
        2  & 2 &0 & \cdots & 0\\
        0 & -1 &2 & \cdots & 0\\
        0 & 2 & -1 &\cdots & 0\\
        \vdots & \ddots & \ddots & \ddots&\vdots\\
        0 & 0 & \cdots & 2 & -1
      \end{vmatrix}\\
      & = - \det A_{N-1} - 4 \det A_{N-2},
    \end{split}
  \end{equation}
  giving us a recursion relation for the determinant.  By inspection,
  \begin{equation}
    \det A_1= -1, \quad \det A_2 = -3.
  \end{equation}
  By induction, recursion relation \eqref{e:matrecursion} demands that
  for all $N$, the determinant of $A_N$ must be odd.  Hence, it is
  never zero.
\end{proof}
A consequence of this is the following Corollary.
\begin{cor}
  Any nontrivial phase locked solution has $\omega \neq 0$.  Moreover,
  given any real valued function $\omega(t) \neq 0$ for all $t \in
  \mathbb{R}$, there exists a nontrivial phase locked solution $\{
  (\phi_j, \rho_j) \}_{j=1}^N$ to \eqref{e:toy_model_hydro}.
\end{cor}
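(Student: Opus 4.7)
The plan is to exploit the nonsingularity of $A_N$ established in the theorem and reduce the corollary to a direct substitution followed by a one-line linear-algebra exercise.

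First I would substitute the phase-locking ansatz \eqref{e:inphase} into the hydrodynamic system \eqref{e:toy_model_hydro}. Since all phase differences $\phi_{j\pm 1}-\phi_j$ vanish identically, every sine term drops out and every cosine term becomes $1$, so the system collapses to $\dot\rho_j = 0$ together with $\dot\phi_j = -\rho_j + 2\rho_{j-1} + 2\rho_{j+1}$. The first equation pins each $\rho_j$ to a constant in time, and for the phases to stay locked the right-hand side of the $\phi$ equation must be independent of $j$; calling that common value $\omega$ produces exactly the algebraic system $A_N\boldsymbol\rho = \omega\mathbf{1}$ of \eqref{e:upmat}. The constancy of $\boldsymbol\rho$ then forces $\omega$ to be constant in $t$, so the ``real valued function $\omega(t)$'' in the statement is effectively a nonzero real constant.

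For the first assertion I argue by contrapositive: if $\omega=0$ then $\boldsymbol\rho\in\ker A_N$, and the theorem yields $\ker A_N=\{0\}$, so $\boldsymbol\rho=0$ and every $b_j=\sqrt{\rho_j}\,e^{i\phi_j}$ vanishes, contradicting nontriviality. Hence any nontrivial phase-locked solution must have $\omega\neq 0$. For the converse, given $\omega\neq 0$ the theorem again makes $A_N$ invertible, so I set $\boldsymbol\rho := \omega\, A_N^{-1}\mathbf{1}$, which is nonzero because $\mathbf{1}\neq 0$, and take $\phi_j(t) := \omega t + \phi_0$ with $\phi_0$ a common constant. A direct substitution confirms that $(\phi_j,\rho_j)$ satisfies \eqref{e:toy_model_hydro} and is nontrivial.

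The only genuine obstacle is interpretive rather than computational: the apparent freedom to prescribe an arbitrary function $\omega(t)$ is an artifact of the statement, since $\dot\rho_j=0$ under phase locking forces $\omega$ to be $t$-independent. A secondary subtlety worth flagging is that $A_N^{-1}\mathbf{1}$ need not be componentwise nonnegative, so for generic $\omega$ the resulting $\boldsymbol\rho$ may contain negative entries; a literal Madelung interpretation then requires either restricting the sign of $\omega$ (when the entries of $A_N^{-1}\mathbf{1}$ share a common sign) or absorbing sign flips into additional $\pi/2$ phase shifts on the affected $b_j$'s.
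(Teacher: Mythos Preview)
Your argument is correct and matches the reasoning the paper leaves implicit: the corollary is stated in the paper without proof, as an immediate consequence of the invertibility of $A_N$, and your contrapositive for the first claim and explicit construction $\boldsymbol\rho=\omega A_N^{-1}\mathbf 1$, $\phi_j(t)=\omega t+\phi_0$ for the second are exactly what the setup preceding the corollary demands.

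Two of your side observations are worth keeping. First, your remark that $\dot\rho_j=0$ forces $\omega$ to be a constant rather than a genuine function of $t$ is correct and in fact catches a small slip in the corollary's wording; the paper's own derivation a few lines earlier says explicitly that $\omega$ is independent of $t$. Second, your caveat about the sign of the entries of $A_N^{-1}\mathbf 1$ is precisely the issue the paper turns to immediately after the corollary (the $N=5$ example in Figure~\ref{f:compact_solns1}), so you have anticipated the next obstruction: the corollary produces a solution of the hydrodynamic system \eqref{e:toy_model_hydro}, but not always one that lifts back to \eqref{e:toy_model} via the Madelung transform.
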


For a given $N$, the linear system can always be solved, and a phase
matched solution of \eqref{e:toy_model_hydro} exists.  However, this
will not always yield a solution of \eqref{e:toy_model}.  As Figure
\ref{f:compact_solns1} shows, at $N=5$, the solution is not strictly
positive, and the Madelung transformation cannot be inverted.  Despite
the obstacle at $N=5$, we can again obtain a strictly positive
solution at $N=8$ and higher, as Figure \ref{f:compact_solns2} shows.
It is possible to ask whether or not, there exist positive
solutions for specific, but arbitrarily large, values of $N$.\footnote{Indeed, using recursive linear algebra techniques, this question has been answered affirmatively thanks to an observation of Stefan Steinerberger about the recursive nature of the sequence of $N$ for which positive solutions exist and a clever argument from Sergei Ivanov through the Math Overflow Project at \url{http://mathoverflow.net/questions/106816}.}

\begin{figure}
  \subfigure[$N=2$]{\includegraphics[width=2.4in]{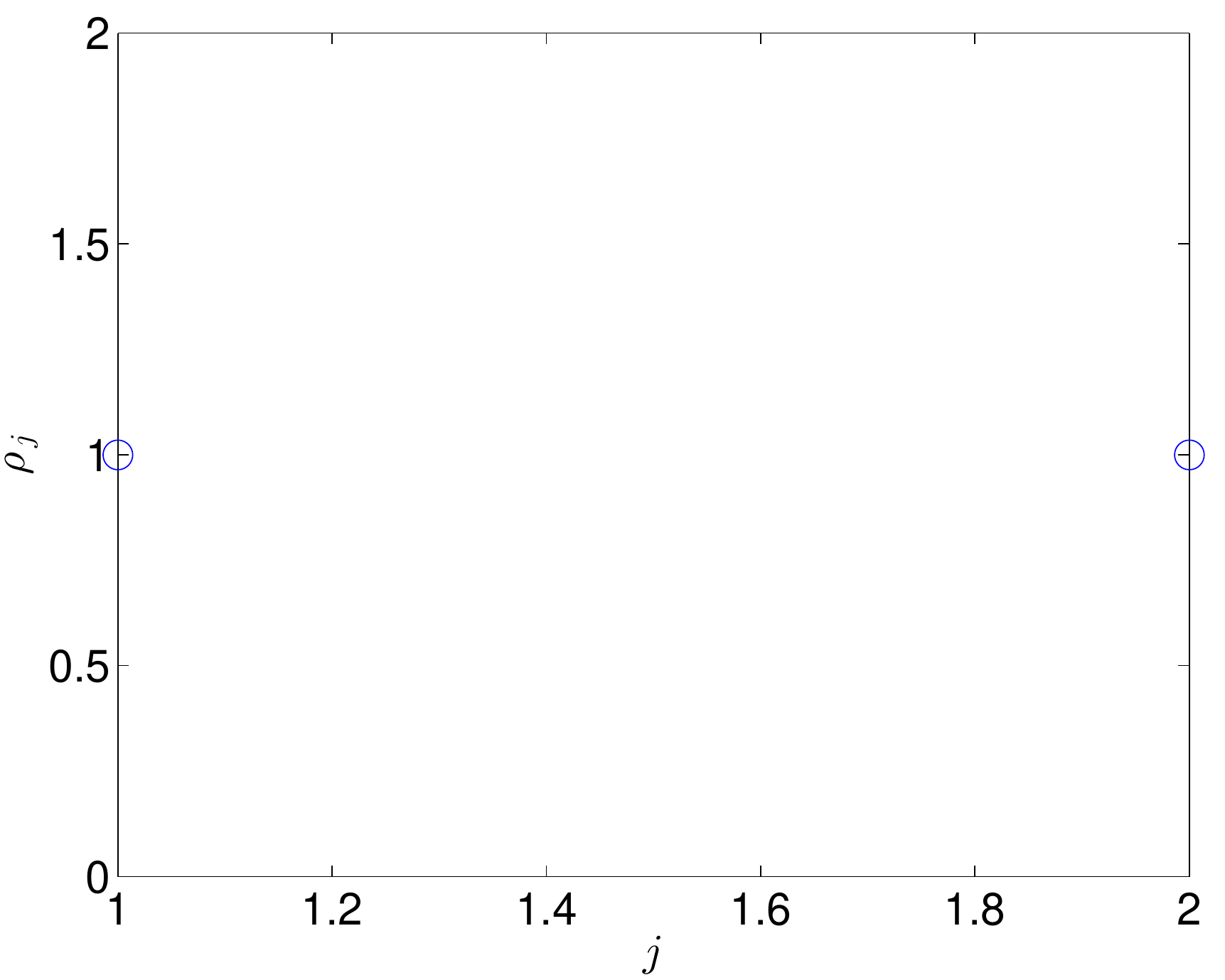}}
  \subfigure[$N=3$]{\includegraphics[width=2.4in]{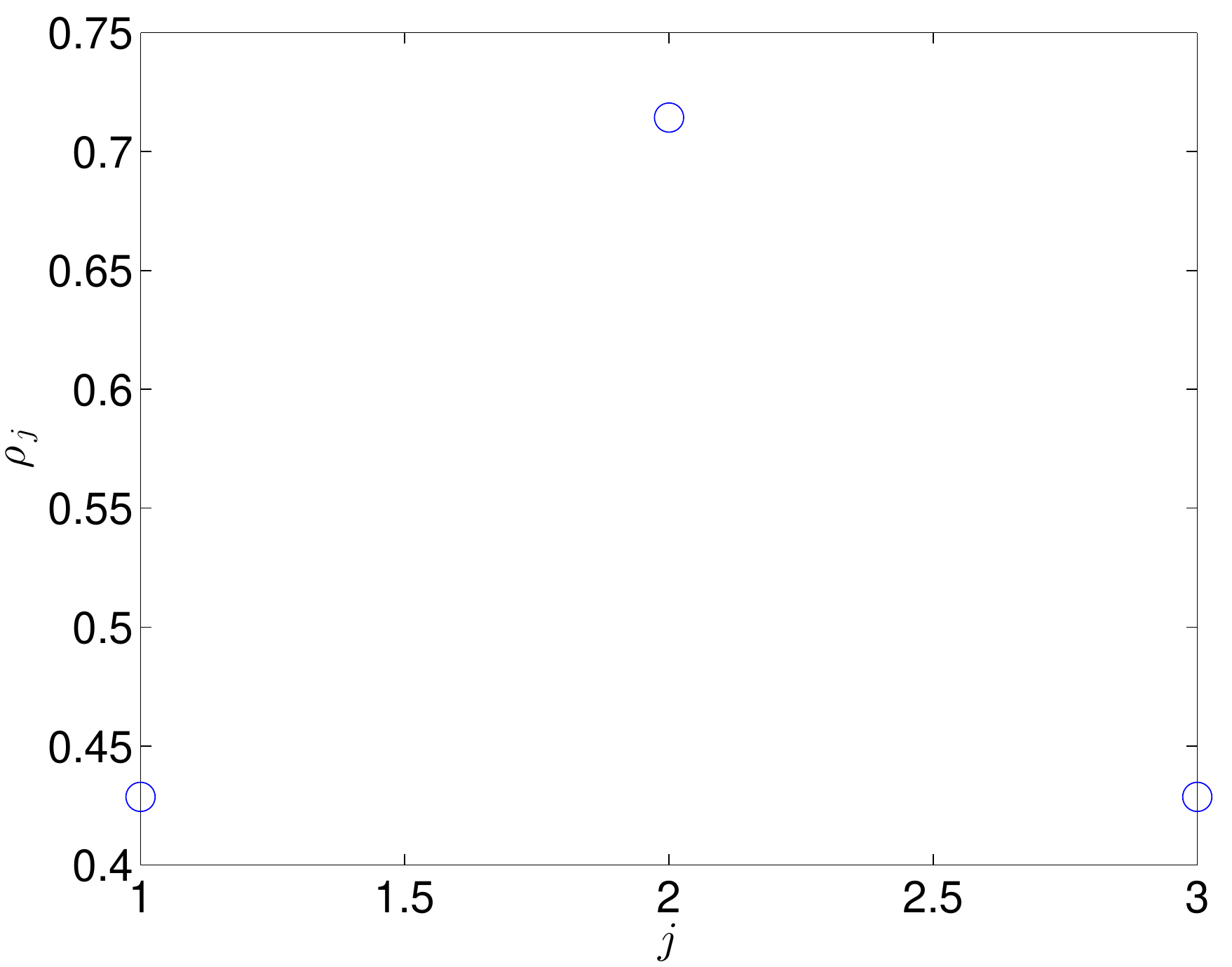}}

  \subfigure[$N=4$]{\includegraphics[width=2.4in]{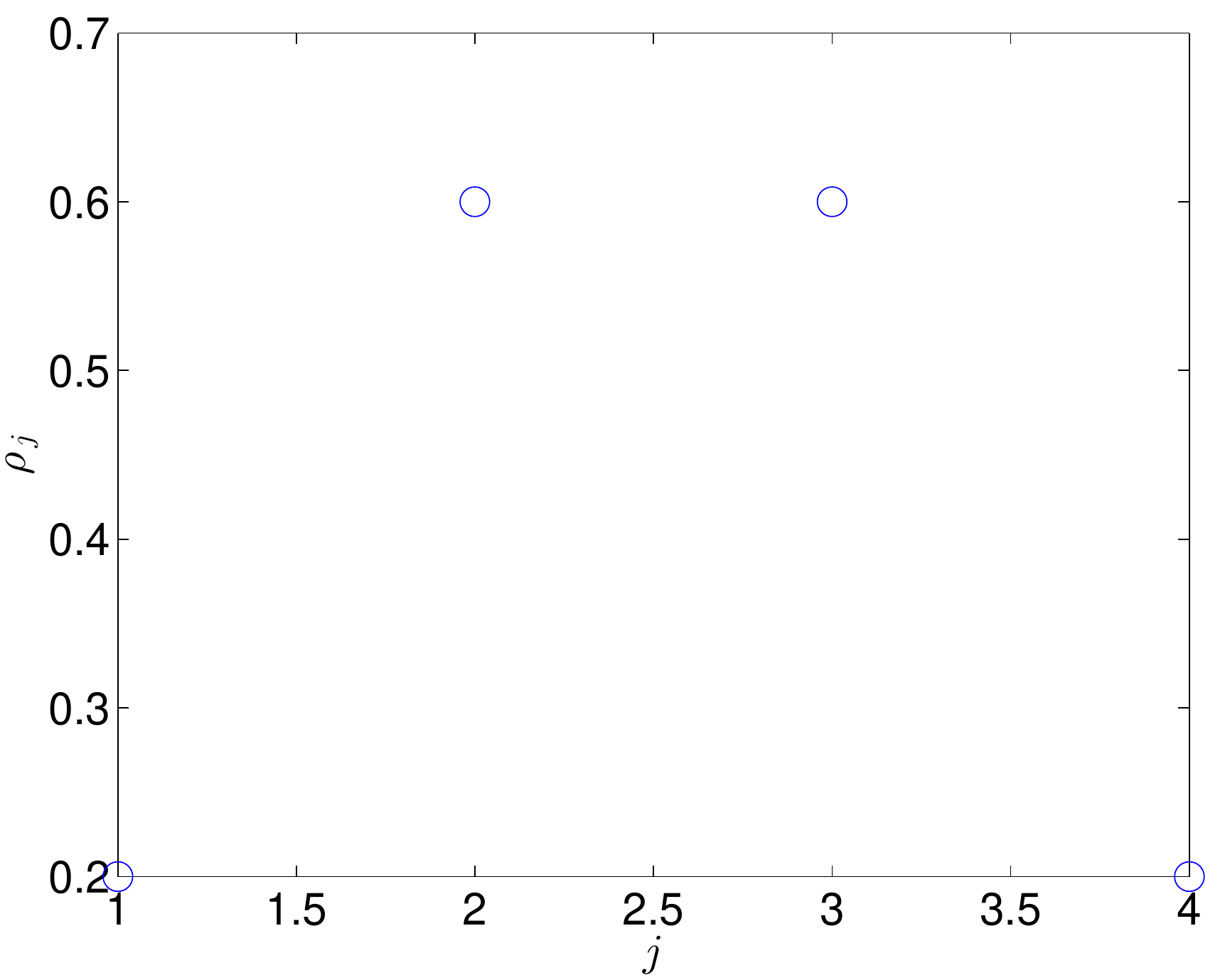}}
  \subfigure[$N=5$]{\includegraphics[width=2.4in]{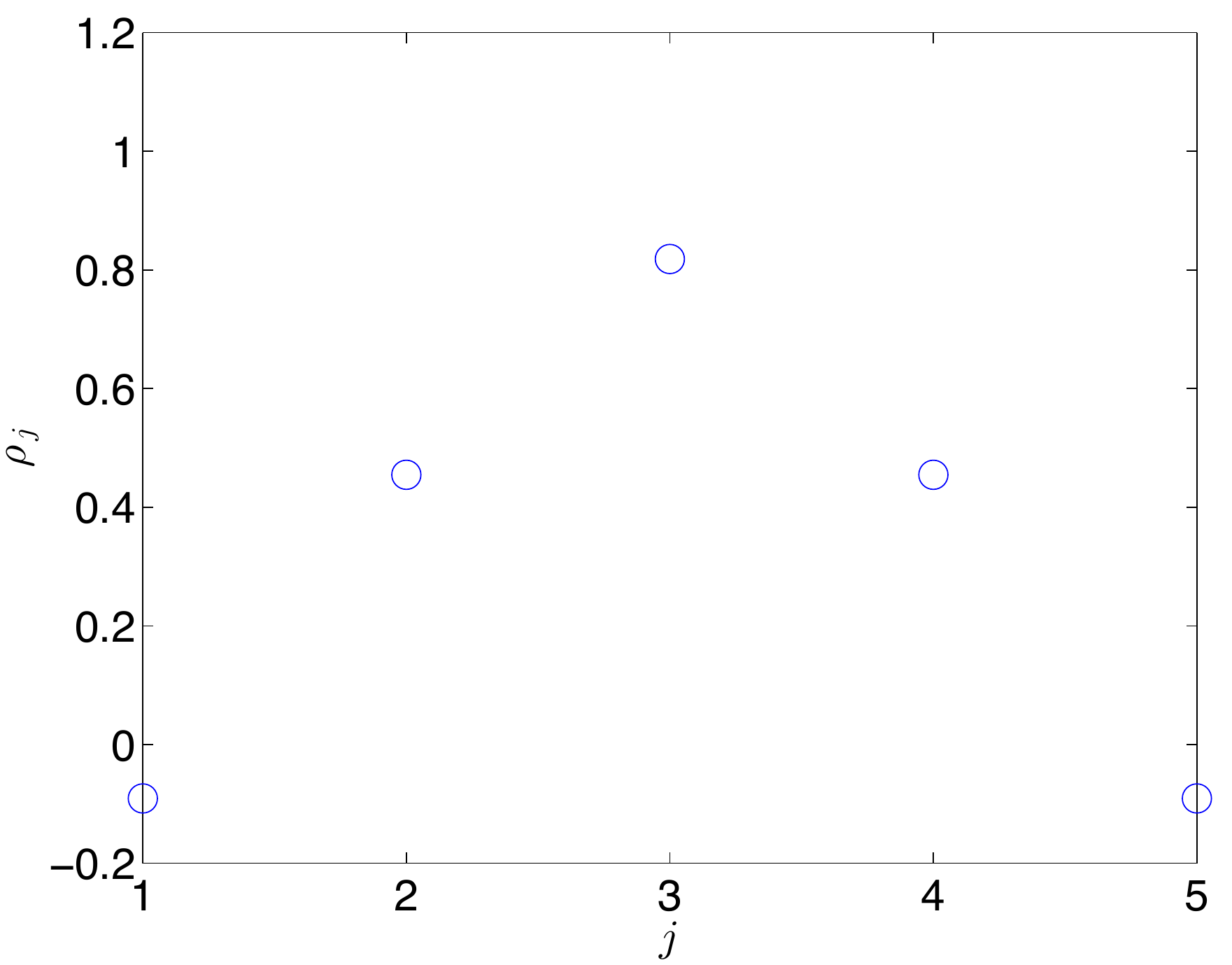}}
  \caption{Solutions of \eqref{e:upmat} with $\omega =1$.  The
    solution at $N=5$ is not strictly positive.}
  \label{f:compact_solns1}
\end{figure}

\begin{figure}
  \subfigure[$N=8$]{\includegraphics[width=2.4in]{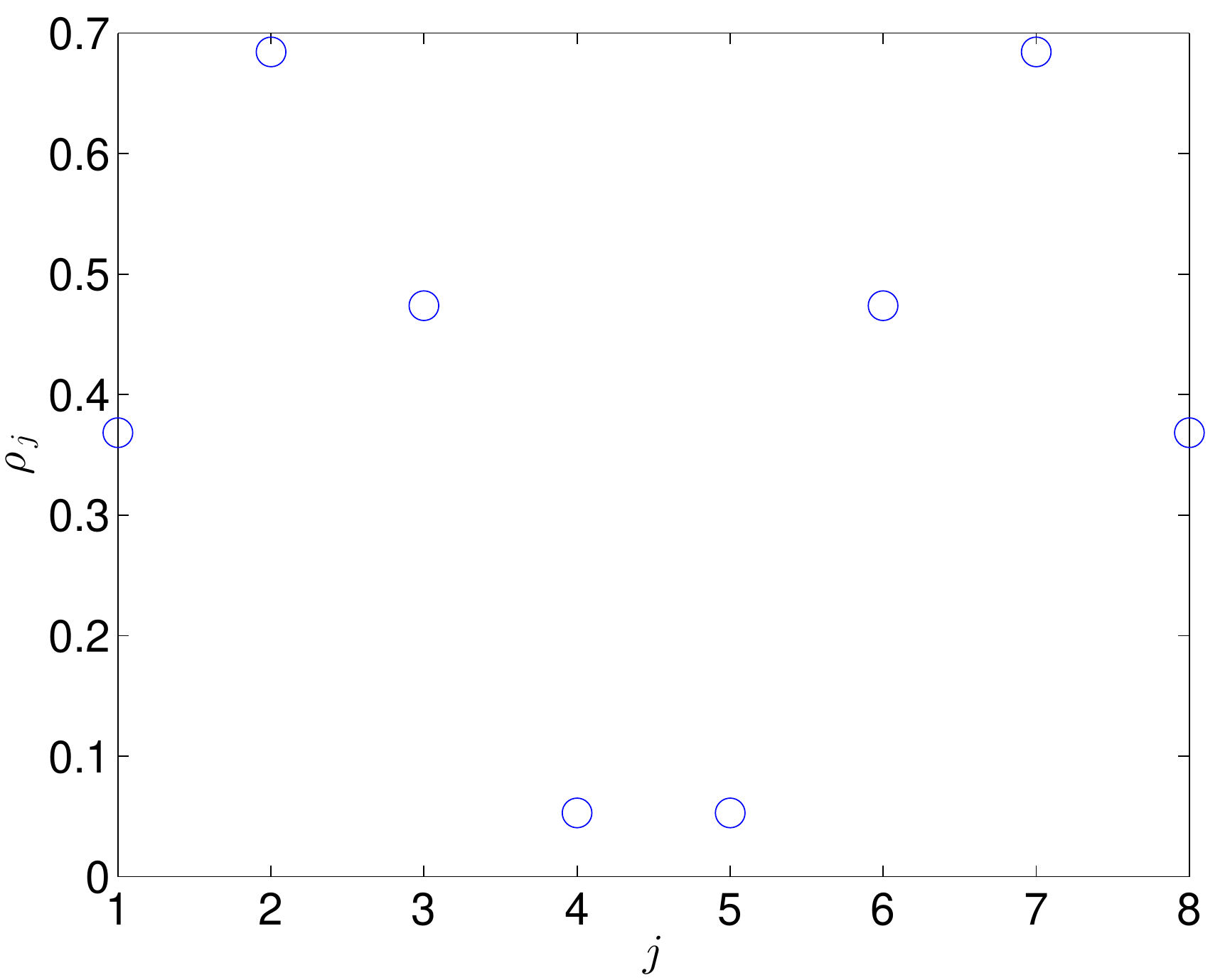}}
  \subfigure[$N=142$]{\includegraphics[width=2.4in]{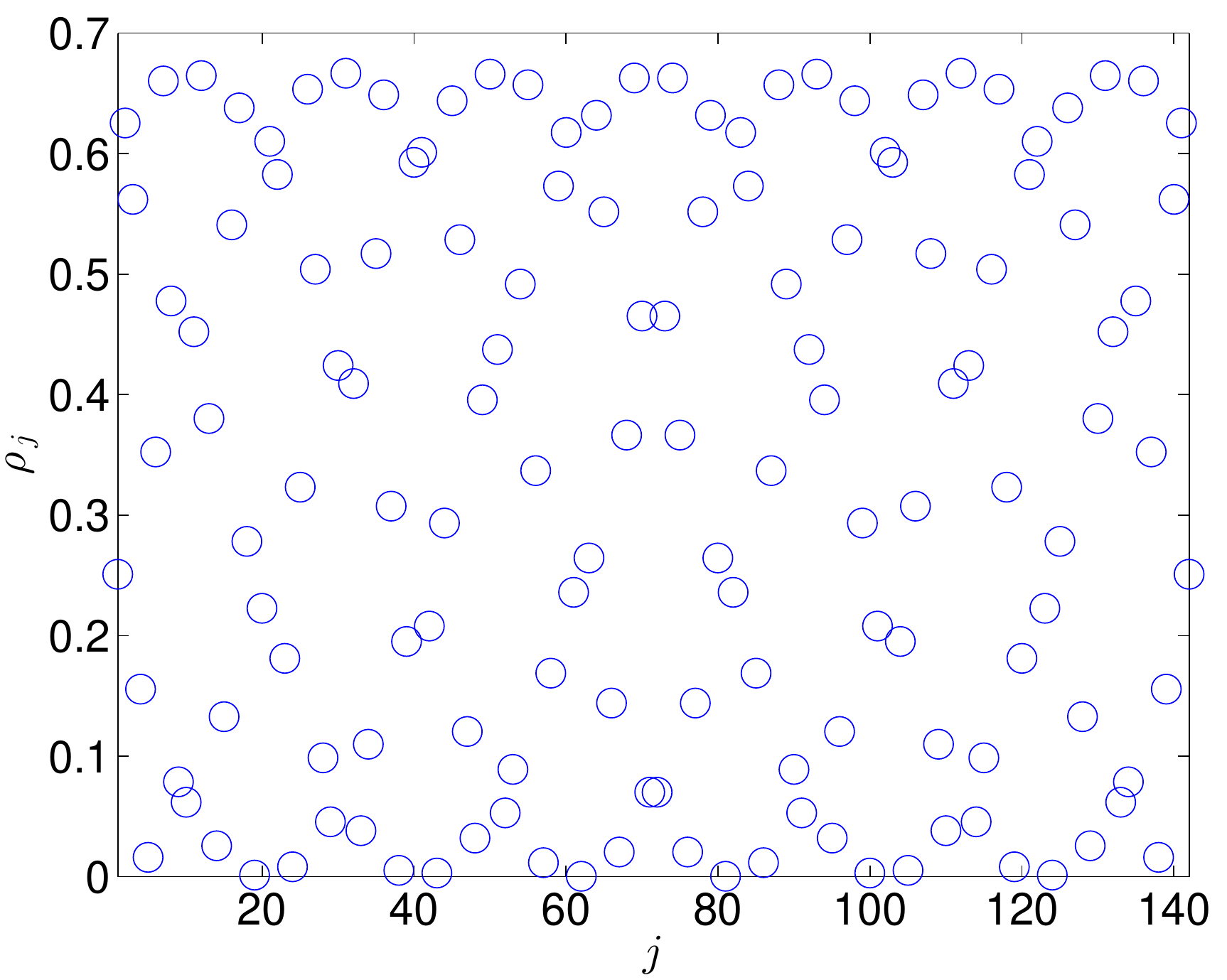}}

  \caption{Solutions of \eqref{e:upmat} with $\omega =1$. Both
    solutions are strictly positive}
  \label{f:compact_solns2}
\end{figure}

Since the equations for the toy model, the hydrodynamic formulation
and \eqref{e:upmat} are autonomous in $j$, one can concatenate these
localized solutions together to form new solutions. For example, one
could place the $N=2$ solution at lattice sites 55 and 56, and the
rest to zero, within \eqref{e:toy_model} with $N=100$.  Thus, one can
construct explicit solutions with isolated regions of support on
arbitrarily large systems.

\subsection{Time Harmonic Periodic Solutions}
Here, we consider the problem with periodic boundary conditions,
\eqref{e:periodicbc}.  Assume for all $j$ the initial condition
satisfies
\begin{subequations}
  \begin{align}
    \rho_{j+1} (0) &= \rho_{j-1} (0),\\
    \phi_{j+1} (0) & = \phi_{j-1} (0).
  \end{align}
\end{subequations}
Furthermore, assume one or both of the variables, $\boldsymbol{\rho}$
or $\boldsymbol{\phi}$, are not uniformly constant.  This will result
in time harmonic solutions.  This can be observed by computing
\begin{subequations}
  \begin{gather}
    \begin{split}
      &\frac{d}{dt}\paren{\phi_{j+1} - \phi_{j-1}} \\
      &\quad =
      - \paren{\rho_{j+1}-\rho_{j-1}} \\
      &\quad\quad + 2 \rho_j
      \bracket{\cos(2(\phi_{j+1}-\phi_j))-\cos(2(\phi_{j}-\phi_{j-1}))}\\
      &\quad \quad +2\rho_{j+2} \cos(2(\phi_{j+2}-\phi_{j+1}))\\
      &\quad\quad -2\rho_{j-2} \cos(2(\phi_{j-1}-\phi_{j-2})) ,
    \end{split}\\
    \begin{split}
      \frac{d}{dt}\paren{\rho_{j+1} - \rho_{j-1}} &=4 \rho_j
      \rho_{j+1}\sin(2(\phi_{j+1}-\phi_j))\\
      &\quad-4 \rho_{j+2} \rho_{j+1}\sin(2(\phi_{j+2}-\phi_{j+1}))\\
      &\quad -4 \rho_{j-1} \rho_{j-2}\sin(2(\phi_{j-1}-\phi_{j-2}))\\
      &\quad+ 4 \rho_{j} \rho_{j-1}\sin(2(\phi_{j}-\phi_{j-1})).
    \end{split}
  \end{gather}
\end{subequations}
If, initially, $\phi_{j+1} = \phi_{j-1}$ and $\rho_{j+1} = \rho_{j-1}$
for all $j$, then they will remain so.  We also have:
\begin{itemize}
\item $\rho_j + \rho_{j-1}$ is constant in $j$ and $t$;
\item $\cos(2 (\phi_j-\phi_{j-1}))$ is constant in $j$, but may vary
  in $t$;
\item $\sin(2 (\phi_j-\phi_{j-1}))$ is constant in $j$, but may vary
  in $t$;
\item $\phi_j+\phi_{j-1}$ is constant in $j$, but may vary in $t$.
\end{itemize}

These symmetries reduce the problem to four variables, $\rho_j$,
$\rho_{j-1}$, $\phi_j$, and $\phi_{j-1}$:
\begin{subequations}
  \begin{align}
    \dot{\phi}_j&=-\rho_j +4\rho_{j-1} \cos(2(\phi_j - \phi_{j-1})) , \\
    \dot{\phi}_{j-1}&=-\rho_{j-1} +4\rho_{j} \cos(2(\phi_j - \phi_{j-1})) , \\
    \dot{\rho}_j&=8 \rho_j \rho_{j-1} \sin(2(\phi_j - \phi_{j-1})) , \\
    \dot{\rho}_{j-1}&=-8 \rho_j \rho_{j-1} \sin(2(\phi_j -
    \phi_{j-1})) .
  \end{align}
\end{subequations}
Defining
\begin{subequations}
  \begin{align}
    \bar{\phi} &\equiv \phi_j + \phi_{j-1},\\
    \Delta\phi&\equiv\phi_j - \phi_{j-1},\\
    \bar{\rho}&\equiv\rho_j + \rho_{j-1},\\
    \Delta{\rho}&\equiv\rho_j - \rho_{j-1},
  \end{align}
\end{subequations}
we have:
\begin{subequations}
  \begin{align}
    \frac{d}{dt}{\bar{\phi} }& =-{\bar\rho} + 4 \bar{\rho}\cos(2\Delta\phi),  \\
    \frac{d}{dt}{\Delta\phi}&=-{\Delta\rho} - 4 \Delta{\rho}\cos(2\Delta\phi),\\
    \frac{d}{dt}{\bar{\rho} }& =0, \\
    \frac{d}{dt}{\Delta\rho}&=16 \rho_j \rho_{j-1} \sin(2 \Delta\phi).
  \end{align}
\end{subequations}
Since $\bar\rho^2 - \Delta\rho^2=4 \rho_j \rho_{j-1}$ and $\bar \rho$
is invariant, we have a closed system of $2$ equations for $\Delta
\phi$ and $\Delta \rho$.
\begin{subequations}
  \begin{align}
    \frac{d}{dt}{\Delta\phi}&=-{\Delta\rho}\bracket{ 1+ 4 \cos(2\Delta\phi)}, \\
    \frac{d}{dt}{\Delta\rho}&=4 \paren{\bar\rho^2 - \Delta\rho^2}
    \sin(2 \Delta\phi).
  \end{align}
\end{subequations}
The system is Hamiltonian with
\begin{equation}
  H = \frac{1}{2}\paren{1 + 4 \cos(2 \Delta \phi)}\paren{\bar\rho^2 - \Delta\rho^2}
\end{equation}
and symplectic structure
\begin{equation}
  \frac{d}{dt} \Delta \phi= \frac{\partial H }{\partial \Delta
    \rho},  \quad \frac{d}{dt} \Delta \rho = -\frac{\partial H }{\partial \Delta
    \phi}.
\end{equation}
Thus, we anticipate time harmonic motion.  An example appears in
Figure \ref{f:period2harmonic}, where the initial condition is
\begin{subequations}
  \label{e:per2ic}
  \begin{gather}
    \rho_1 = \rho_2 = 1, \\
    \phi_1 = \frac{\pi}{4}, \quad \phi_2 = 0.
  \end{gather}
\end{subequations}

\begin{figure}
  \subfigure{\includegraphics[width=6cm]{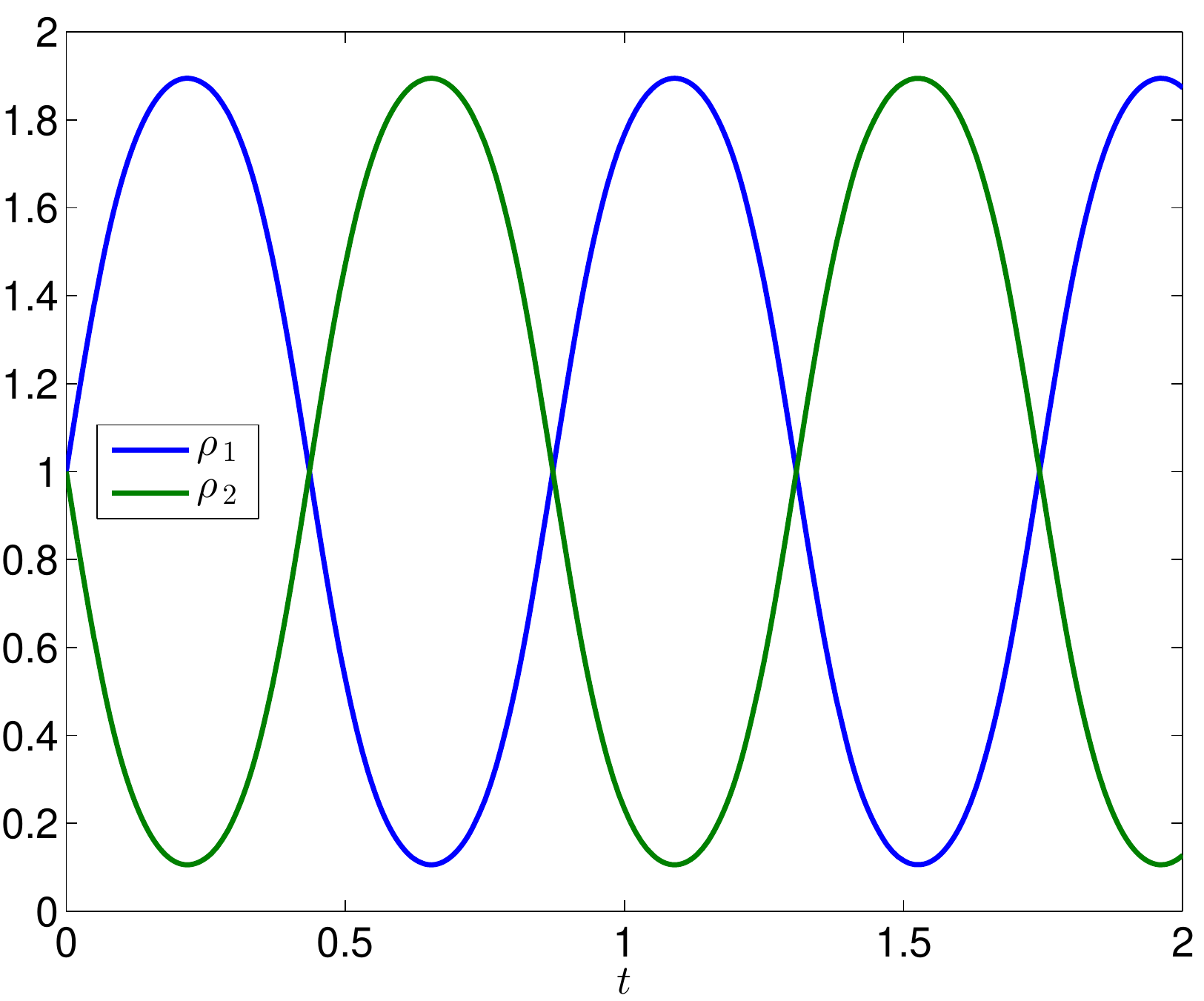}}
  \subfigure{\includegraphics[width=6cm]{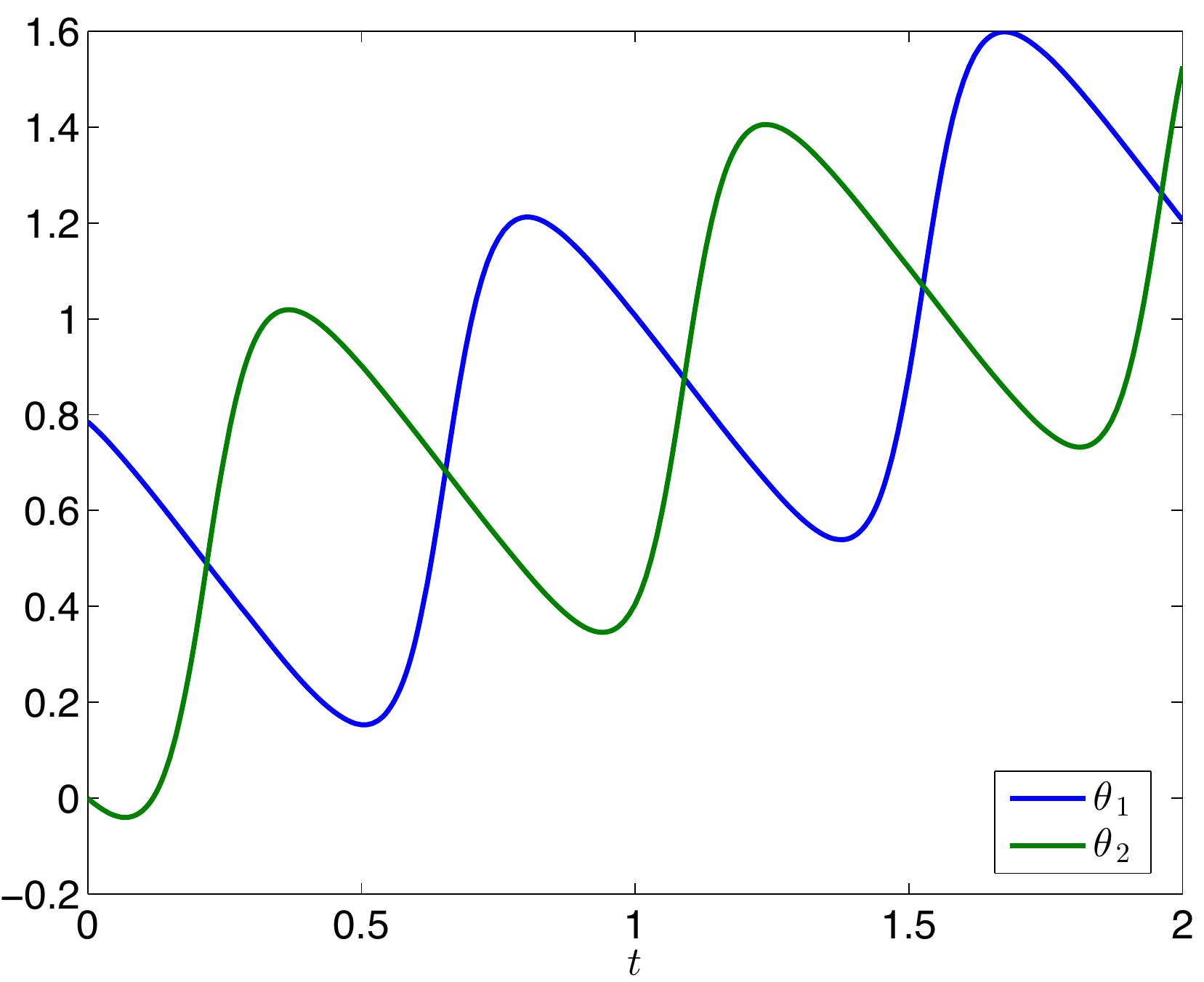}}
  \caption{ }
  \label{f:period2harmonic}
\end{figure}

\section{Discrete Rarefaction Waves and Weak Turbulence}

In this section, we explore the dynamics when the initial
configuration is given by the out of phase initial condition $\phi_j =
\phi_{j+1} - \frac{\pi}{4}$.  If this phase relation were to somehow
persist, the resulting equations hints at the discrete Burger's
formulation of the hydrodynamic equations
\begin{eqnarray}
  \label{e:burgers}
  \left\{ \begin{array}{l}
      \dot\phi_j = 0,  \\
      \dot\rho_j = -4 \rho_j \rho_{j-1}  + 4 \rho_j \rho_{j+1}  = -8 \rho_j \left( \frac{ \rho_{j+1} - \rho_{j-1} }{ 2} \right).
    \end{array} \right.  
\end{eqnarray}
This has discrete rarefaction and shock wave dynamics.  We call
\eqref{e:burgers} the discrete Burger's equation since, were we to discretize
\begin{equation*}
  \rho_t = -8 \rho \nabla \rho,
\end{equation*}
in space and take the gridpoint spacing paramter equal to one, we
would recover the above equation.  We note here that the rarefaction waves we observe have 
similar dynamics to  those found in Fermi-Pasta-Ulam
chains, see \cite{HerrmannRademacher}.

\begin{figure}
  \subfigure{\includegraphics[width=6.2cm,type=pdf,ext=.pdf,read=.pdf]{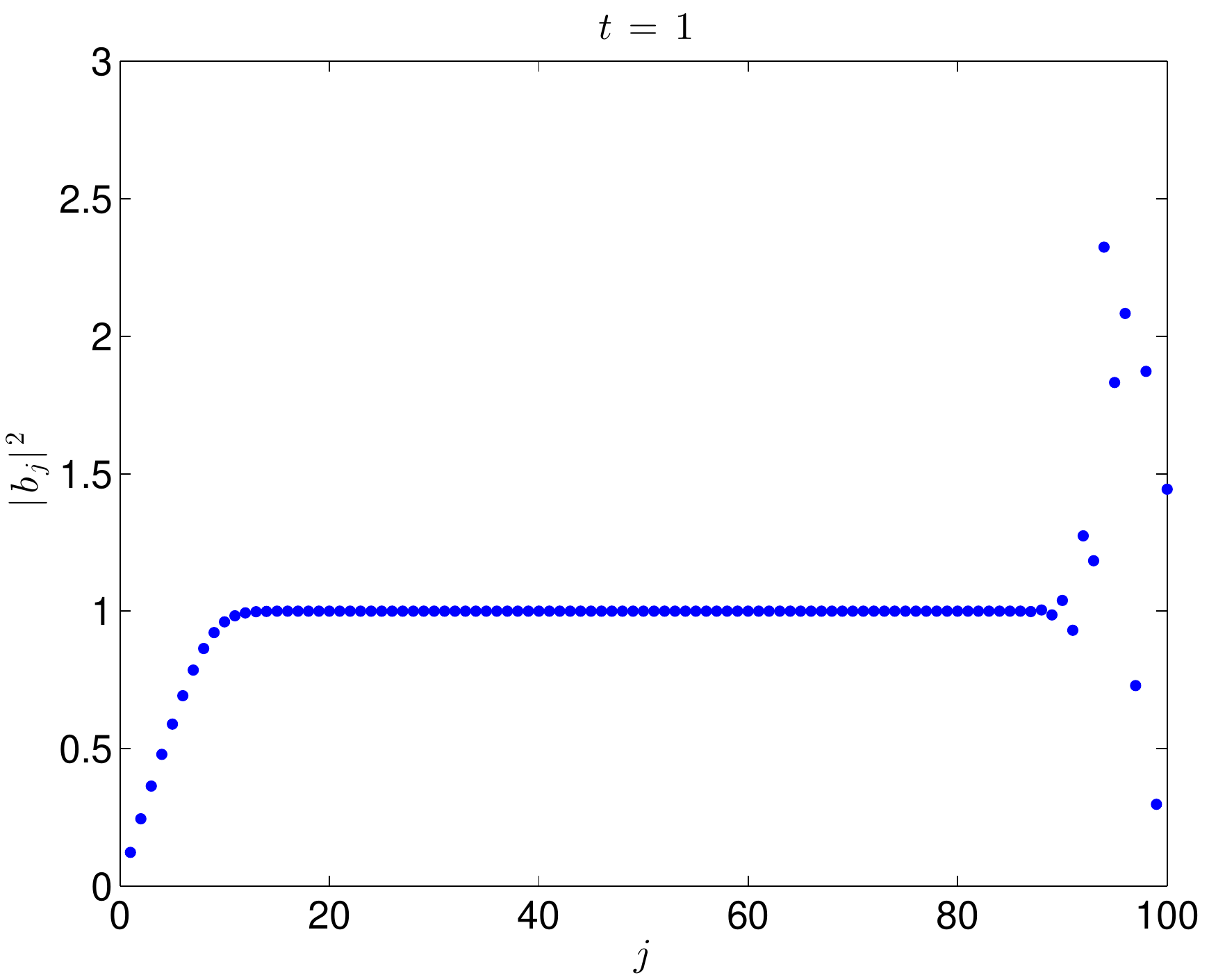}}
  \subfigure{\includegraphics[width=6.2cm,type=pdf,ext=.pdf,read=.pdf]{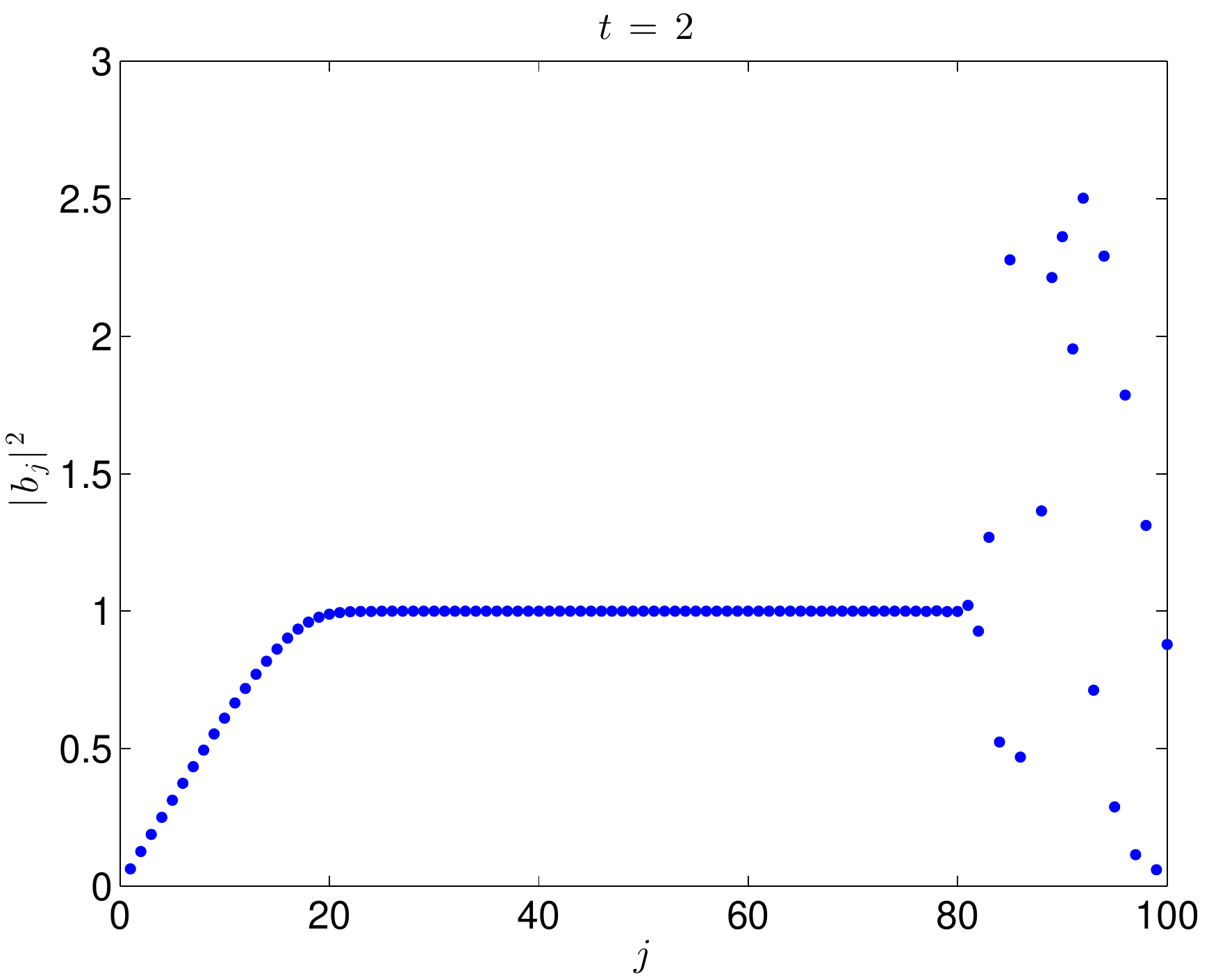}}
  \subfigure{\includegraphics[width=6.2cm,type=pdf,ext=.pdf,read=.pdf]{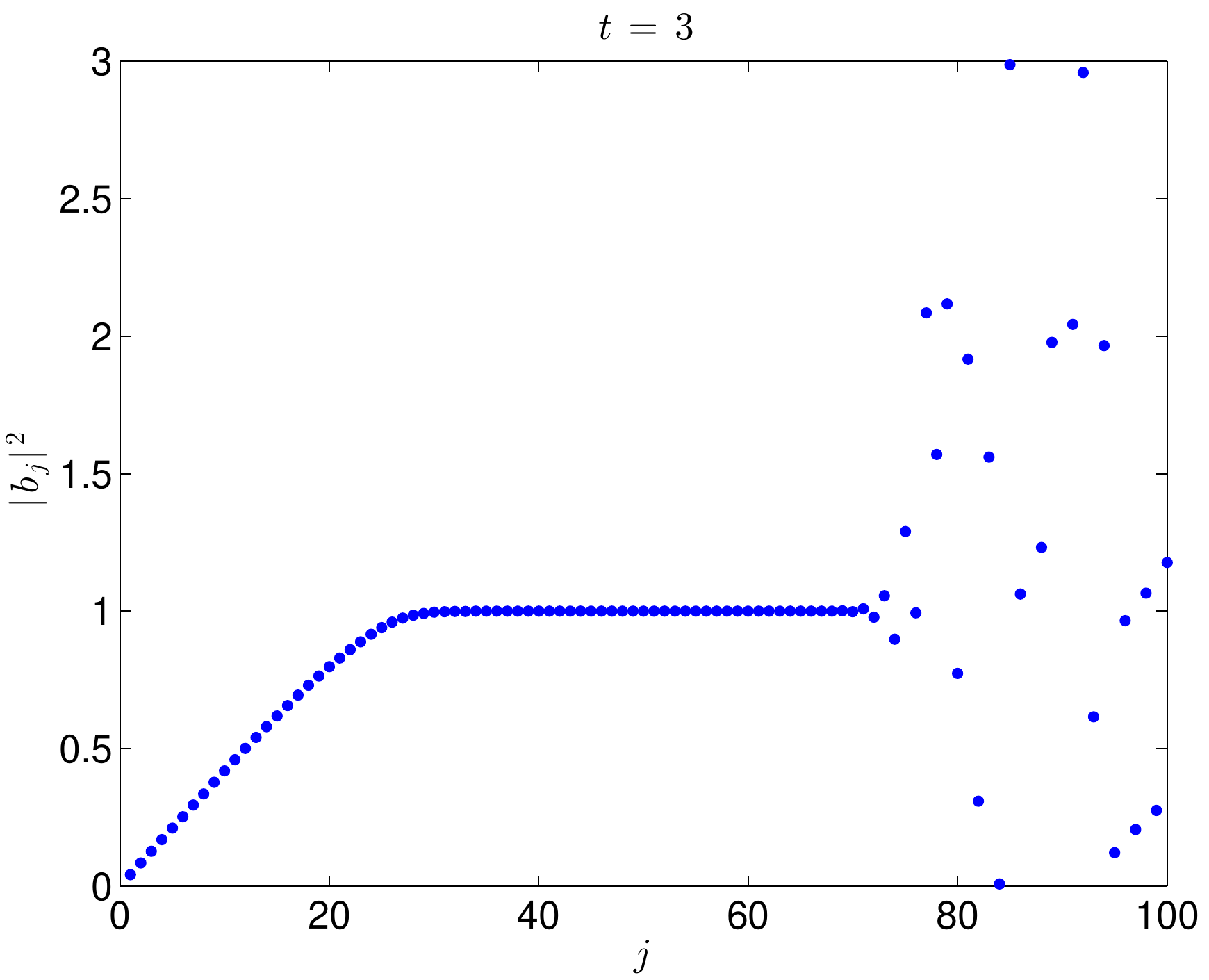}}
  \subfigure{\includegraphics[width=6.2cm,type=pdf,ext=.pdf,read=.pdf]{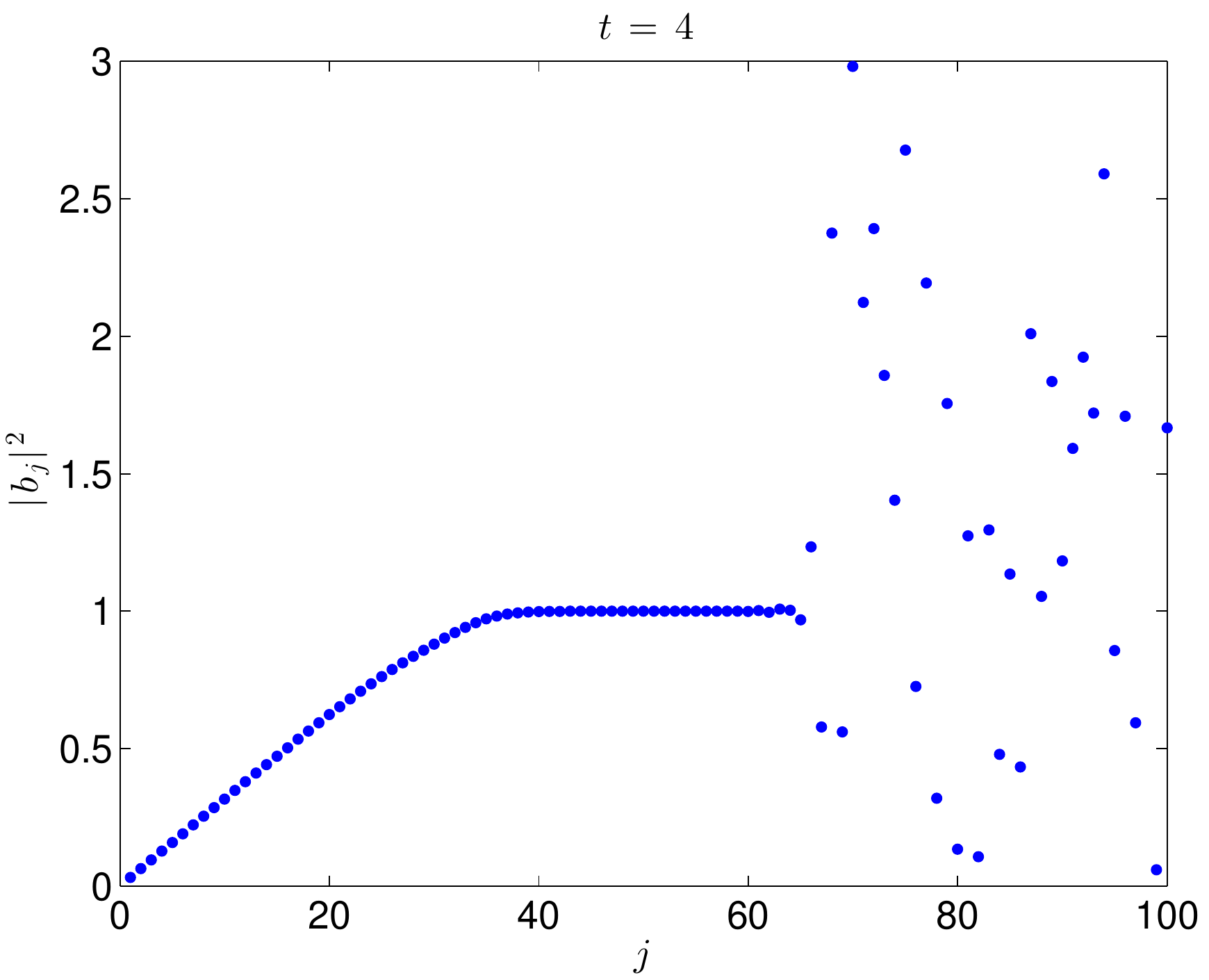}}
  \caption{A combination of discrete rarefaction and dispersive
    shocking.  As an initial condition, $\rho_j =1$ at all values of
    $j$.}
  \label{f:shock1}

\end{figure}

However in \eqref{e:toy_model_hydro}, there are additional terms which
prevent this phase relation from persisting.  We show here how the
solutions evolve, with the initial condition
\begin{equation}
  \label{e:shock_ic}
  b_j = \exp i \set{ (j-1)\pi/4}.
\end{equation}
As a first example, we solve \eqref{e:toy_model} with the initial
condition \eqref{e:shock_ic} over $N=100$ lattice sites.  The results
appear in Figures \ref{f:shock1} and \ref{f:shock1_norms}.  As can be
seen, the $h^s$ norms eventually cease to be monotonic.  To see
persistent growth in the norms, we can look at a system with $N=5000$
sites and for longer a time; see Figures \ref{f:shock2} and
\ref{f:shock2_norms}.

\begin{figure}
  \includegraphics[width=6cm,type=pdf,ext=.pdf,read=.pdf]{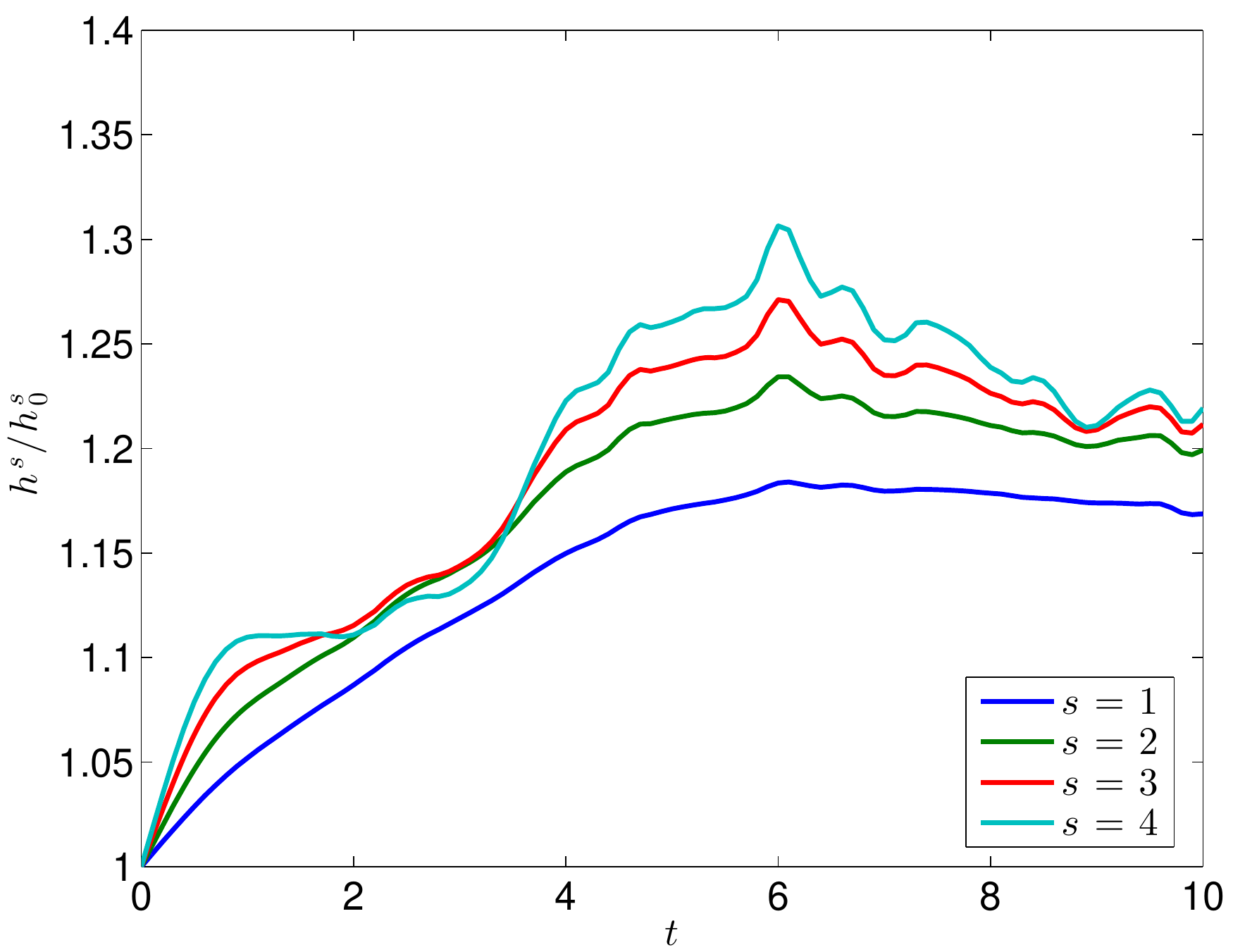}
  \caption{Growth in the $h^s$ norms for the dynamics of Figure
    \ref{f:shock1}.}
  \label{f:shock1_norms}
\end{figure}

\begin{figure}
  \subfigure{\includegraphics[width=6cm,type=pdf,ext=.pdf,read=.pdf]{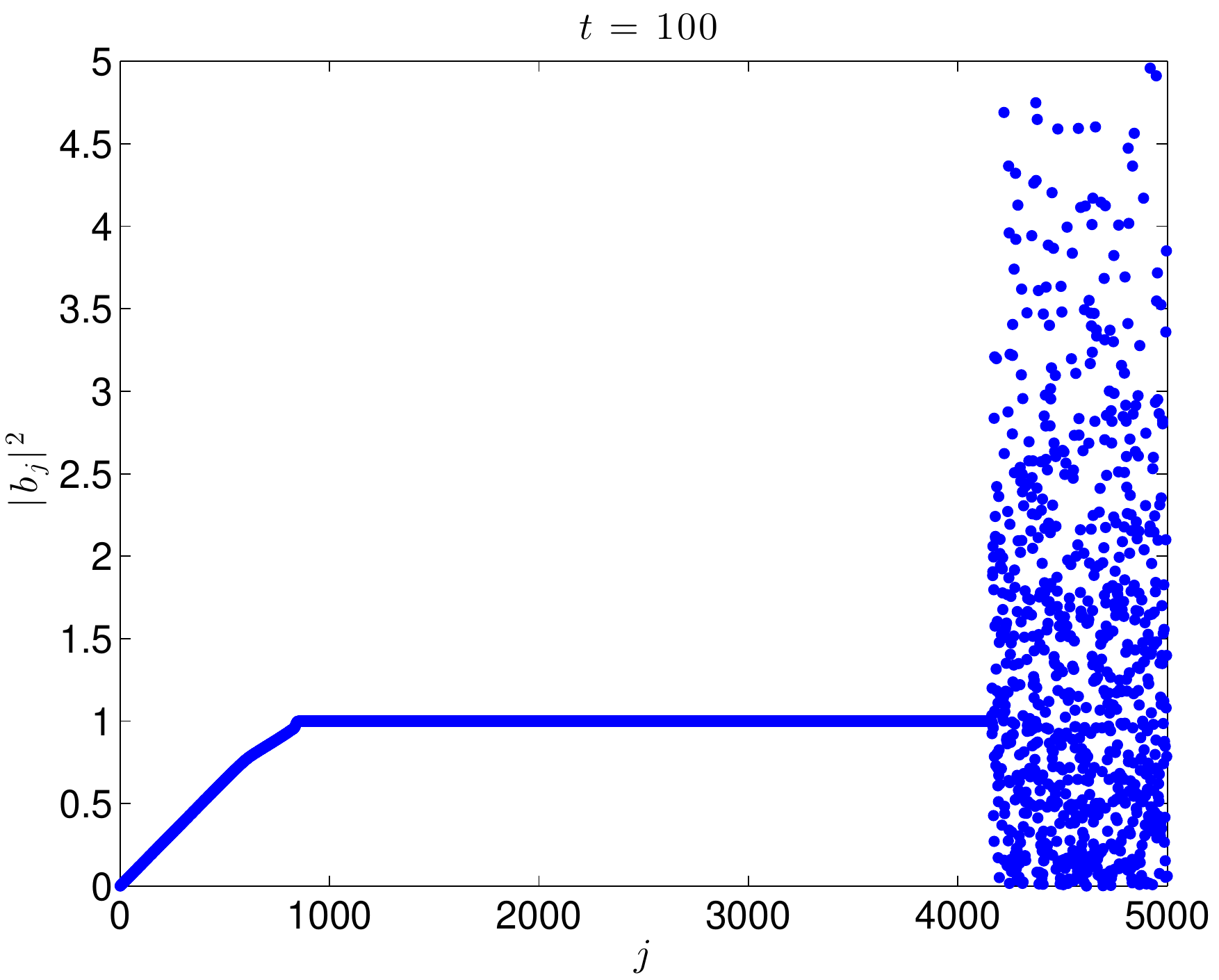}}
  \subfigure{\includegraphics[width=6cm,type=pdf,ext=.pdf,read=.pdf]{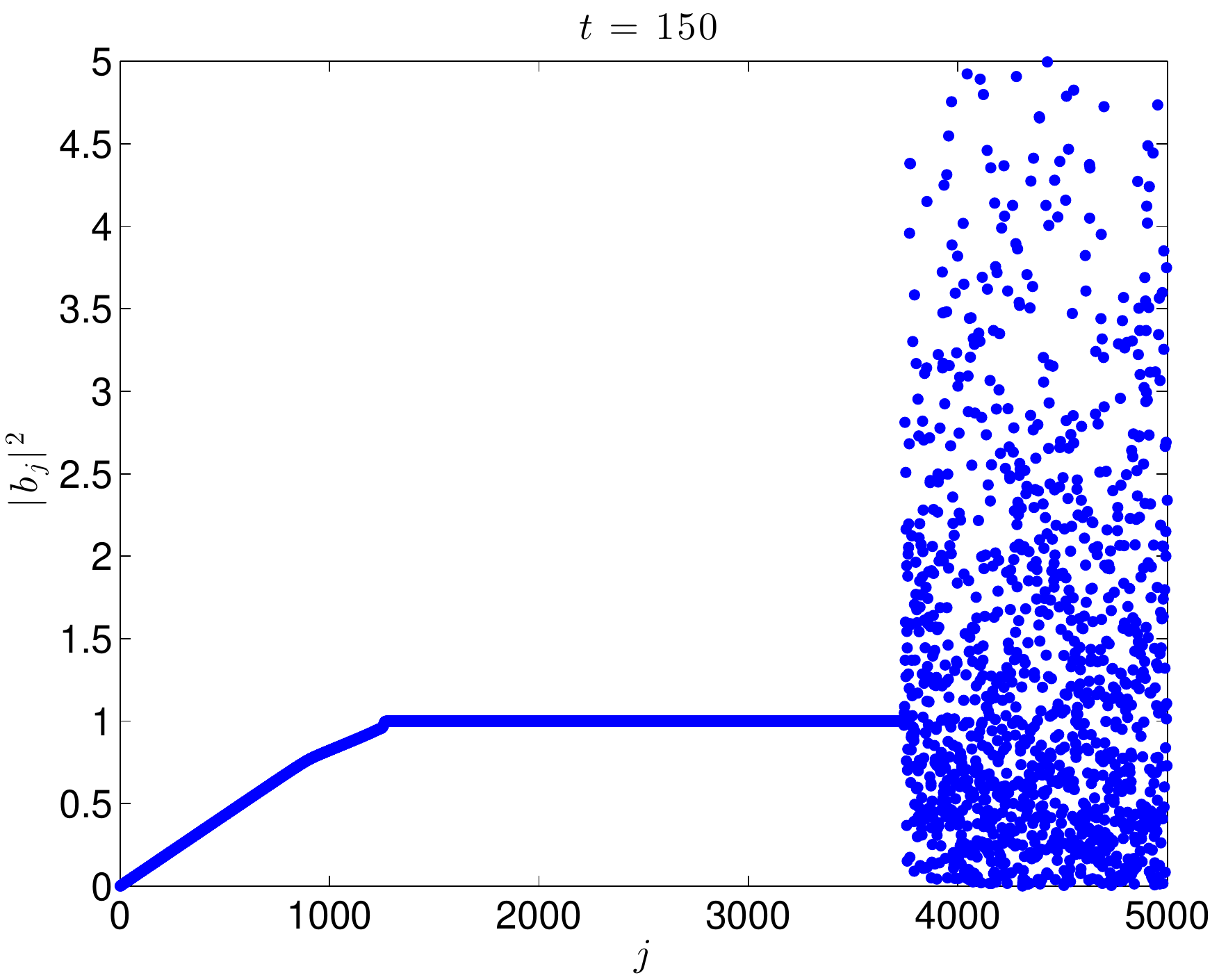}}
  \subfigure{\includegraphics[width=6cm,type=pdf,ext=.pdf,read=.pdf]{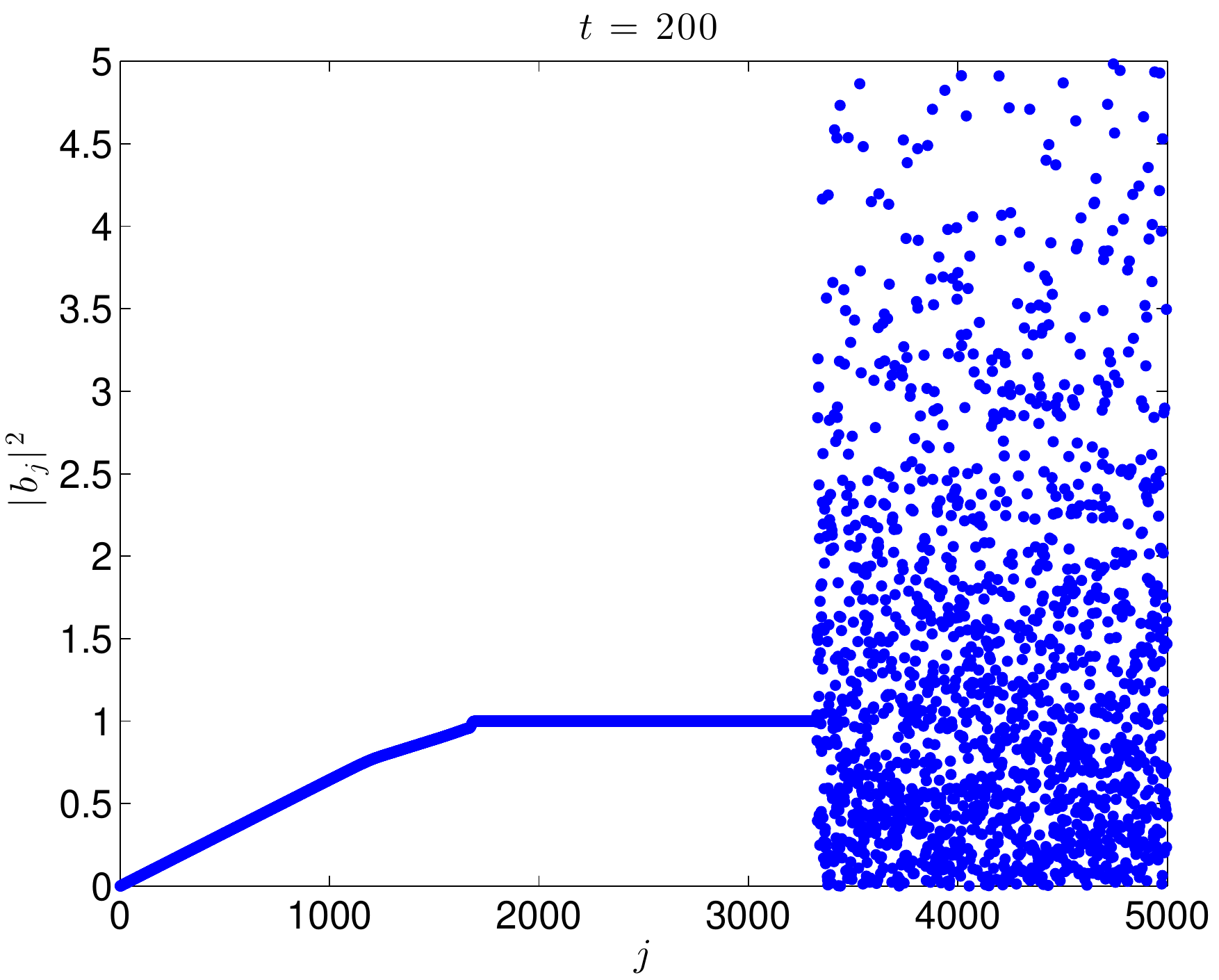}}
  \caption{A combination of discrete rarefaction and dispersive
    shocking.  As an initial condition, $\rho_j =1$ at all values of
    $j$.}
  \label{f:shock2}

\end{figure}

\begin{figure}
  \includegraphics[width=6cm,type=pdf,ext=.pdf,read=.pdf]{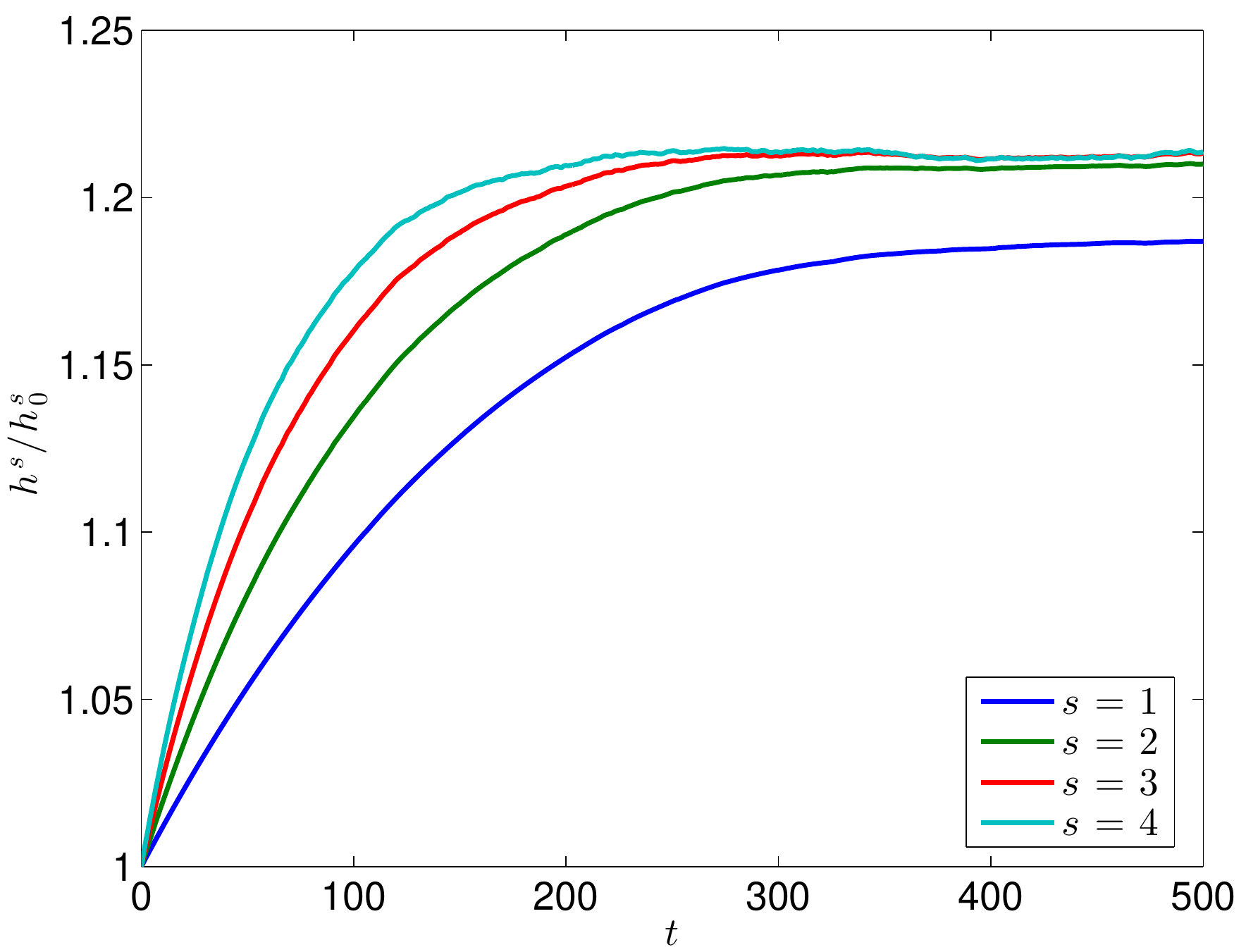}
  \caption{Growth in the $h^s$ norms for the dynamics of Figure
    \ref{f:shock2}.}
  \label{f:shock2_norms}
\end{figure}

The simulation on $N=5000$ reveals that the rarefaction portion of the
solution has more structure than is apparent in the case of $N=100$.
As shown in Figure \ref{f:shock2_zoom}, the rarefaction wave evolves
with several different slopes.

\begin{figure}
  \subfigure{\includegraphics[width=6cm,type=pdf,ext=.pdf,read=.pdf]{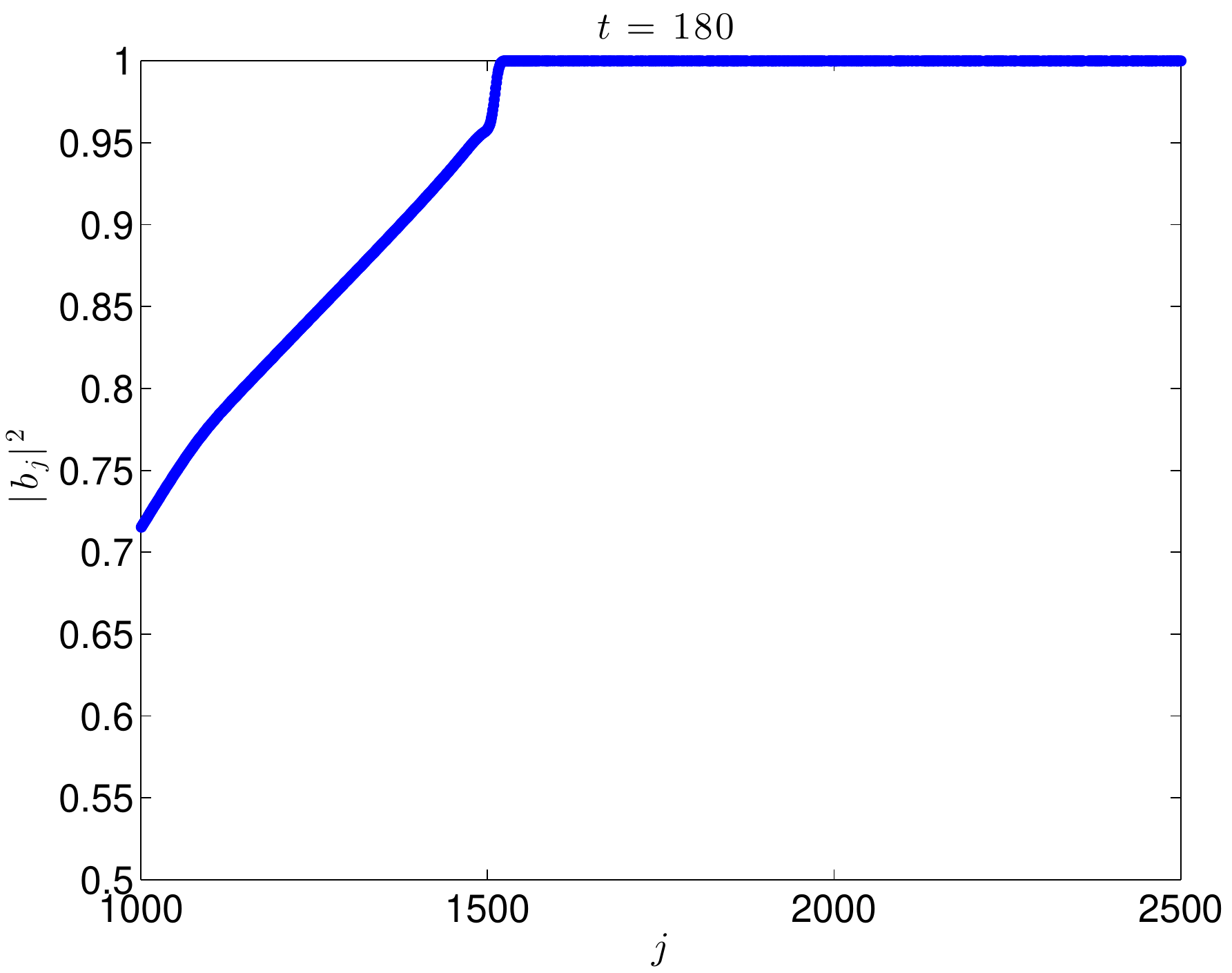}}
  \subfigure{\includegraphics[width=6cm,type=pdf,ext=.pdf,read=.pdf]{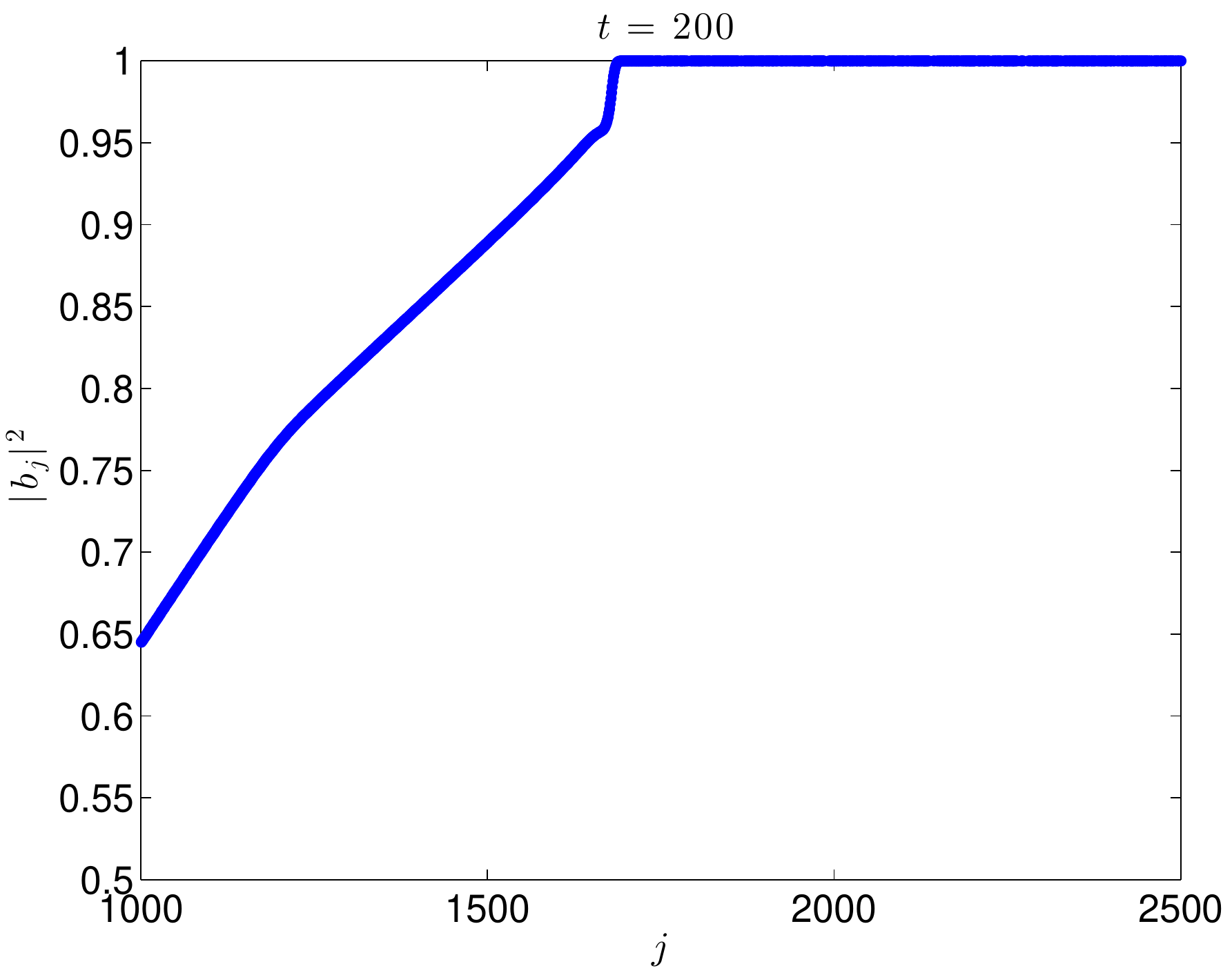}}
  \subfigure{\includegraphics[width=6cm,type=pdf,ext=.pdf,read=.pdf]{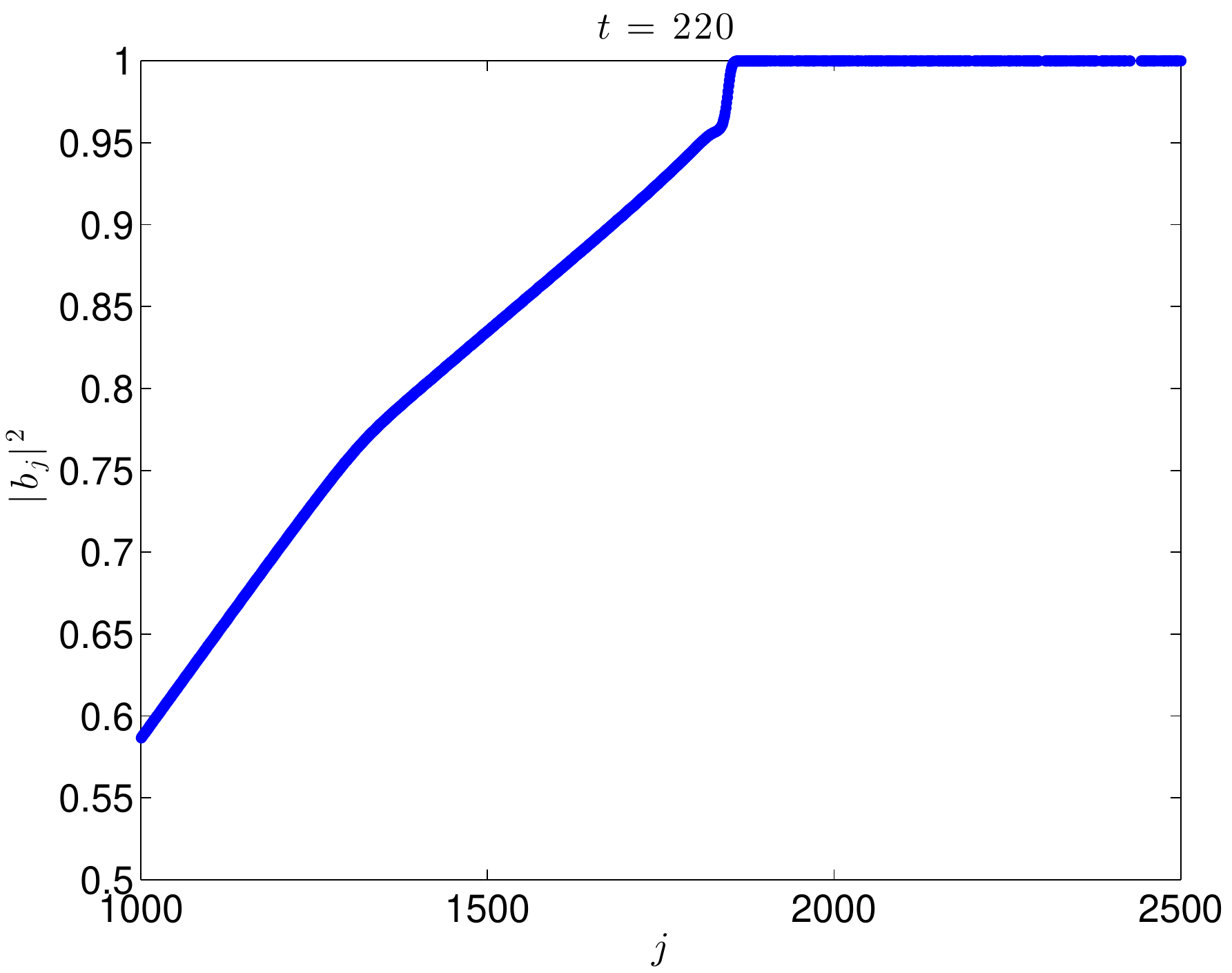}}
  \subfigure{\includegraphics[width=6cm,type=pdf,ext=.pdf,read=.pdf]{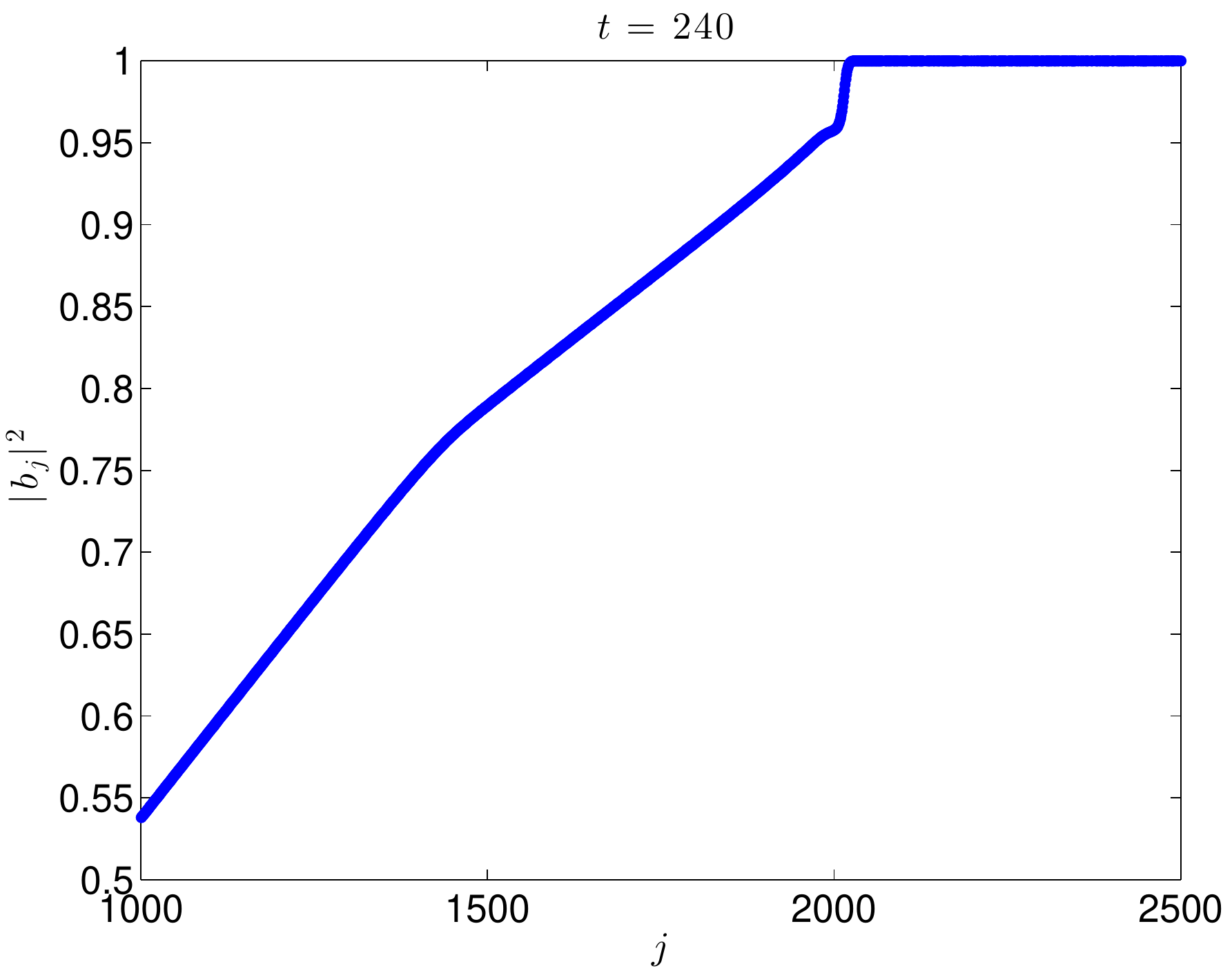}}
  \caption{A close up of the front for the discrete rarefaction wave.
    As an initial condition, $\rho_j =1$ at all values of $j$.}
  \label{f:shock2_zoom}

\end{figure}

Unfortunately, as $N\to \infty$, \eqref{e:shock_ic} will not
correspond to a finite mass solution.  Thus, we studied the weighted
initial condition,
\begin{equation}
  \label{e:weightedshock_ic}
  b_j = \exp i \set{ (j-1)\pi/4}/j.
\end{equation}
This, too, results in energy transfer, though it is not monotonic.
Several frames from this simulation appear in Figure
\ref{f:weightedshock}, and the growth of the norms can be seen in
Figure \ref{f:weightedshock_norms}.  The norm growth is quite
pronounced; this may be due to an inability of mass to propagate
backwards, against the weight $1/j$.

In similar calculations with
\begin{equation}
  \label{e:generalizedweightedshock_ic}
  b_j = \exp i \set{ (j-1)\pi/4}/{(w(j))^\sigma}.
\end{equation}
for $0<\sigma<1$, where $w(j) \to \infty$ as $j \to \infty$, we
observe that some form of the rarefaction front propagates forward
even with a tail in the higher nodes and that the structure of the
rarefaction wave persists longer as $\sigma \to 0$.
Since the rarefaction wave and the backward dispersive shock travel at
finite speeds in the simulation, we observe motion to
large $j$ on much longer time scales for large $N$.  Here, the weights
in \eqref{e:generalizedweightedshock_ic} allow us to study rarefaction
waves in a setting with $h^1$ norms of order $1$ instead of order $N$.
In addition, we observe that the rarefaction wave solution is robust
even for initial data of the form
\eqref{e:generalizedweightedshock_ic}, which has less back scattering
thanks to the smaller jump at the right endpoint.  As the rarefaction
front enters the decaying tail, it does however begin to lose some
mass at regular intervals, but continues to propagate weakly.

All rarefaction wave solutions presented in this section 
result in norm growth when mapped back to solutions of
\eqref{e:dcnls}.  This is due to Proposition $2.1$ of \cite{CKSTT}, which states that a mass
shift to the higher nodes in the Toy Model results in growth of higher
Sobolev norms of the corresponding solution to \eqref{e:dcnls}. This
is fundamental to showing the importance of tracking the rarefaction
wave front moving toward large $j$ in \eqref{e:toy_model}.  However, a
more detailed result relating to the frequency scales at each
generation $\Lambda_j$ and a better categorization of families of
resonant frequencies would be required to address this issue in its
entirety and observe a rarefaction front in the resonant frequencies
of the torus.  It is unclear at this point how to directly translate solutions
with frequency cascades in \eqref{e:toy_model} to computationally
observable solutions with frequency cascades leading to $H^s$ norm
growth for $s>1$ in \eqref{e:dcnls}.  Here, however, we have
demonstrated the robustness of solutions that move mass in \eqref{e:toy_model} from
low to high $j$.

\begin{figure}
  \subfigure{\includegraphics[width=5cm,type=pdf,ext=.pdf,read=.pdf]{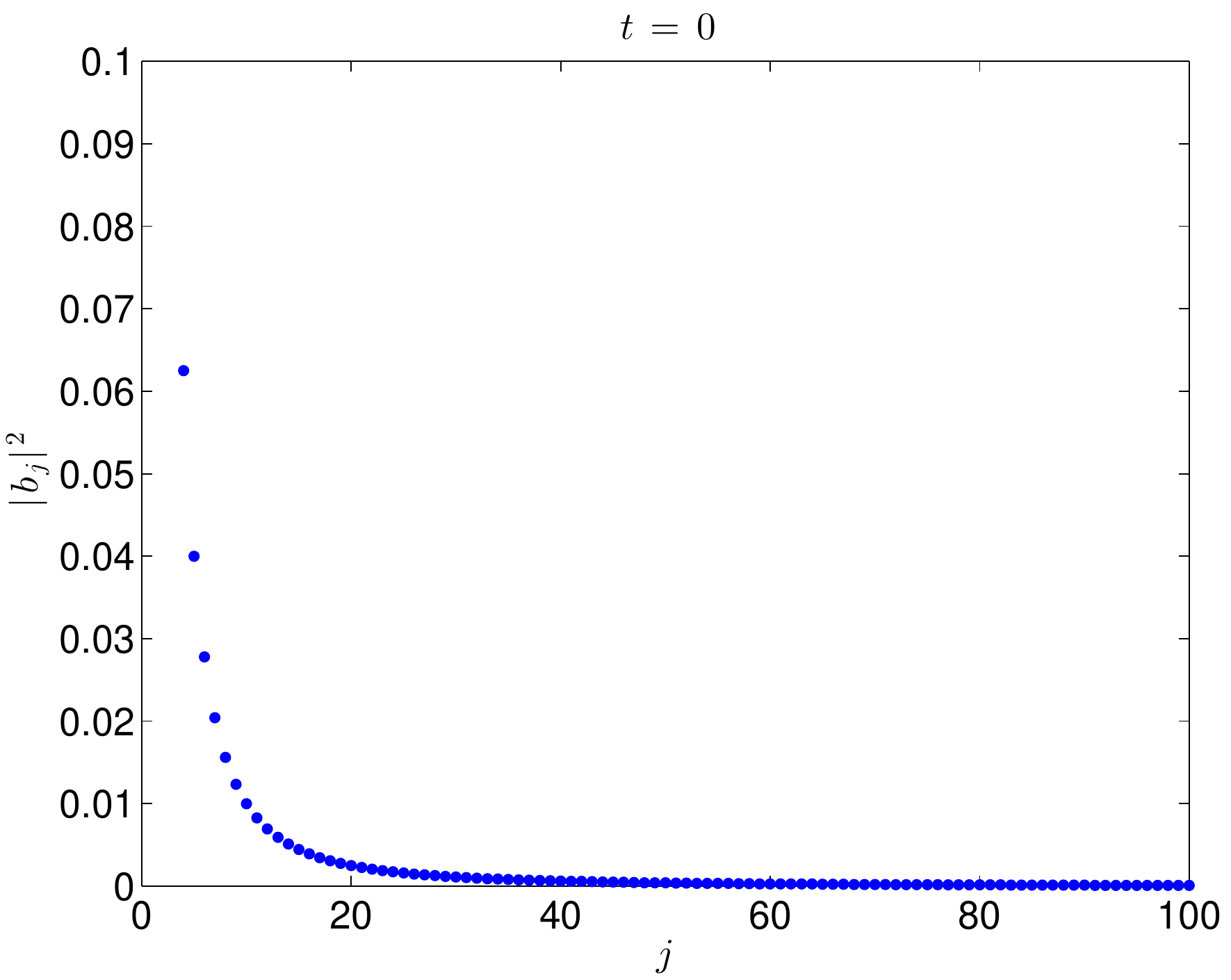}}
  \subfigure{\includegraphics[width=5cm,type=pdf,ext=.pdf,read=.pdf]{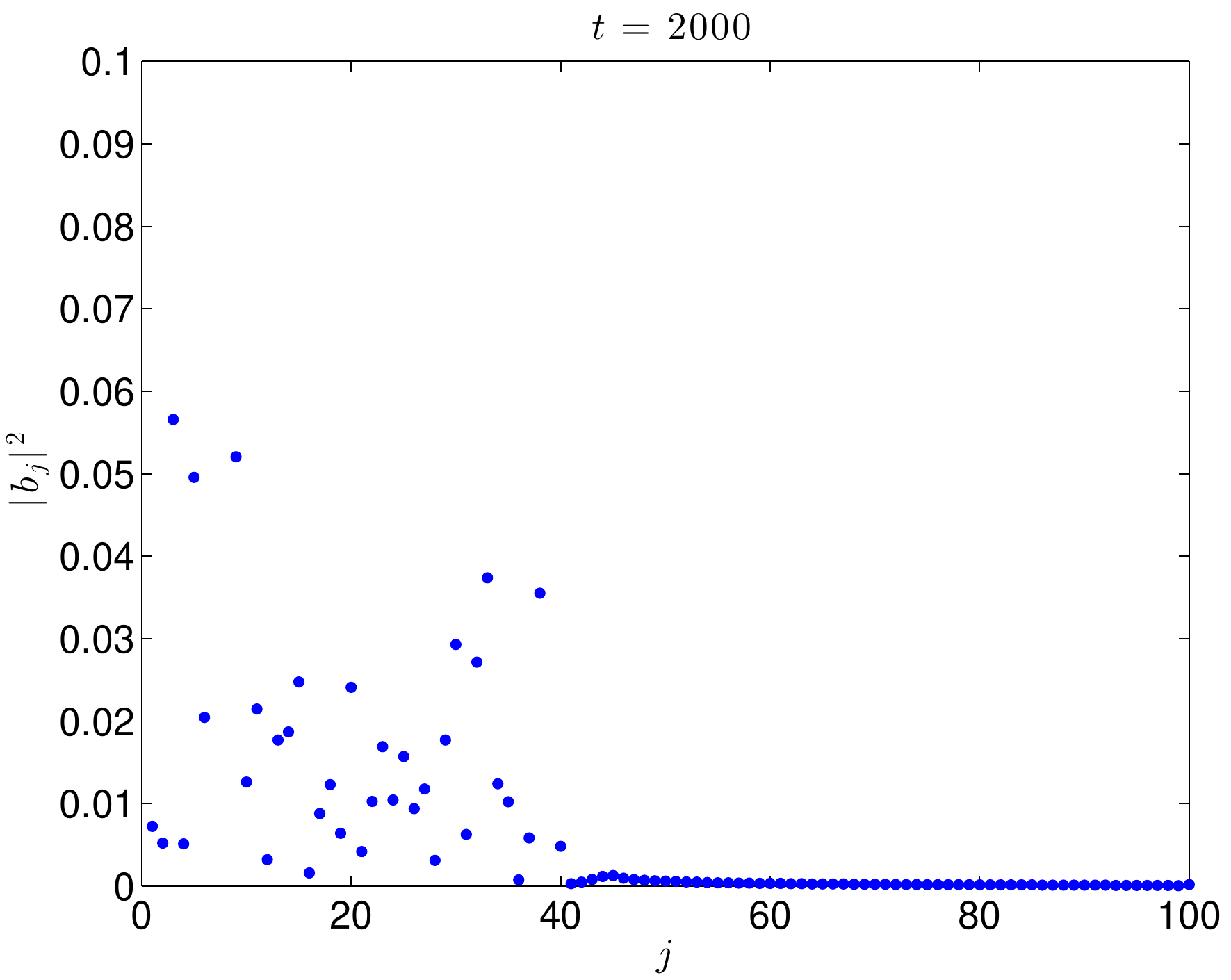}}
  \subfigure{\includegraphics[width=5cm,type=pdf,ext=.pdf,read=.pdf]{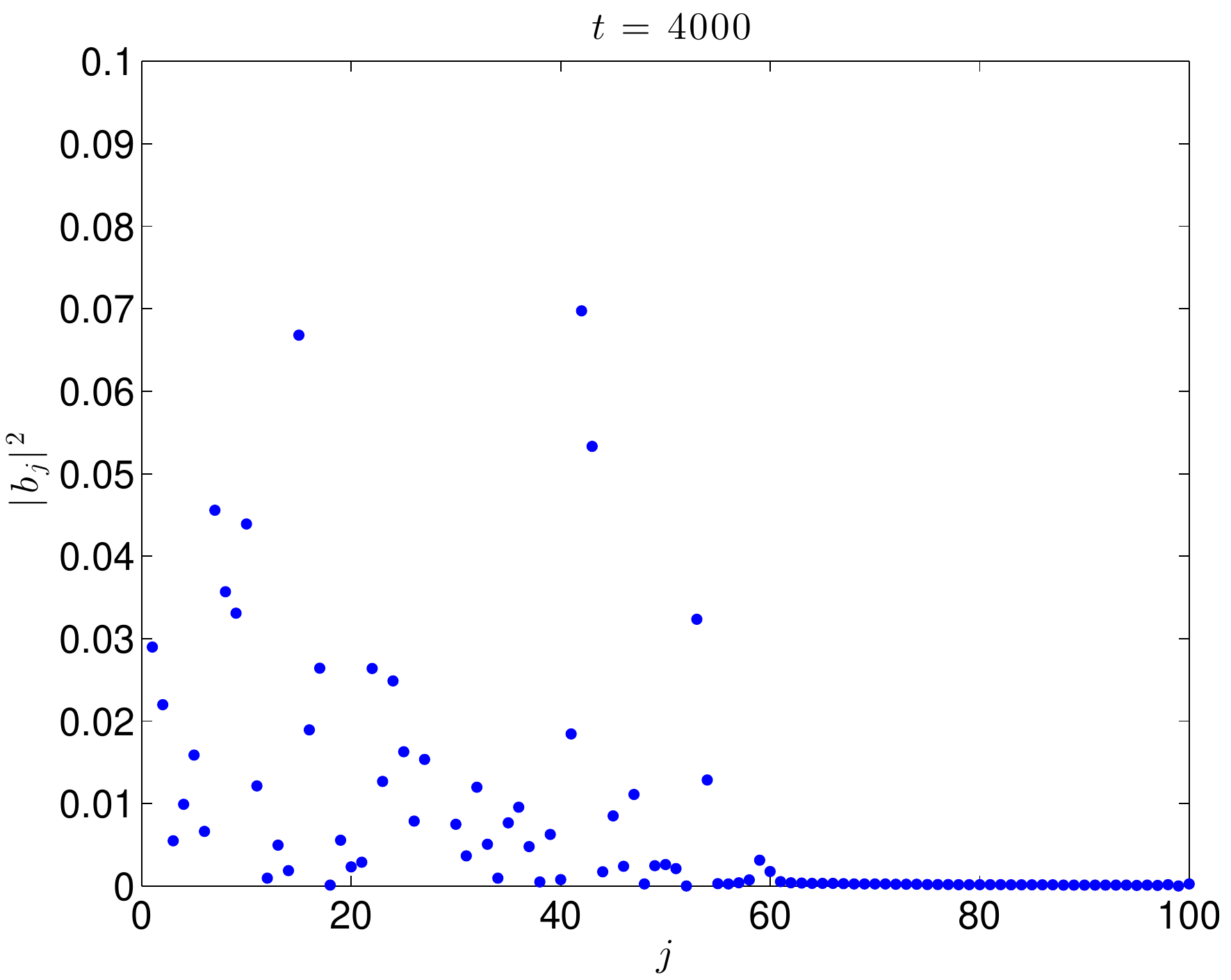}}
  \subfigure{\includegraphics[width=5cm,type=pdf,ext=.pdf,read=.pdf]{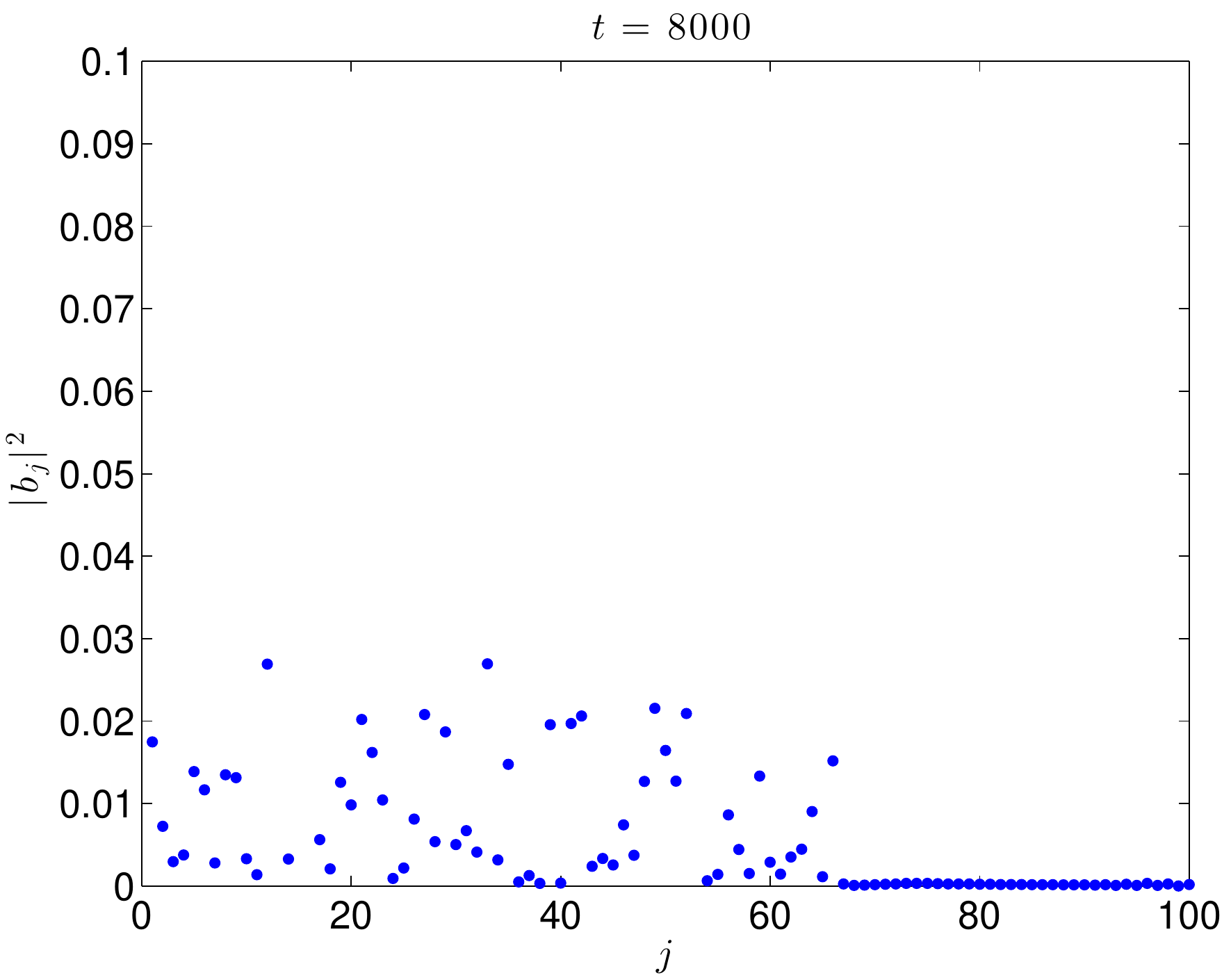}}
  \caption{Evolution of the initial condition
    \eqref{e:weightedshock_ic}.}
  \label{f:weightedshock}
\end{figure}

\begin{figure}
  \includegraphics[width=5cm,type=pdf,ext=.pdf,read=.pdf]{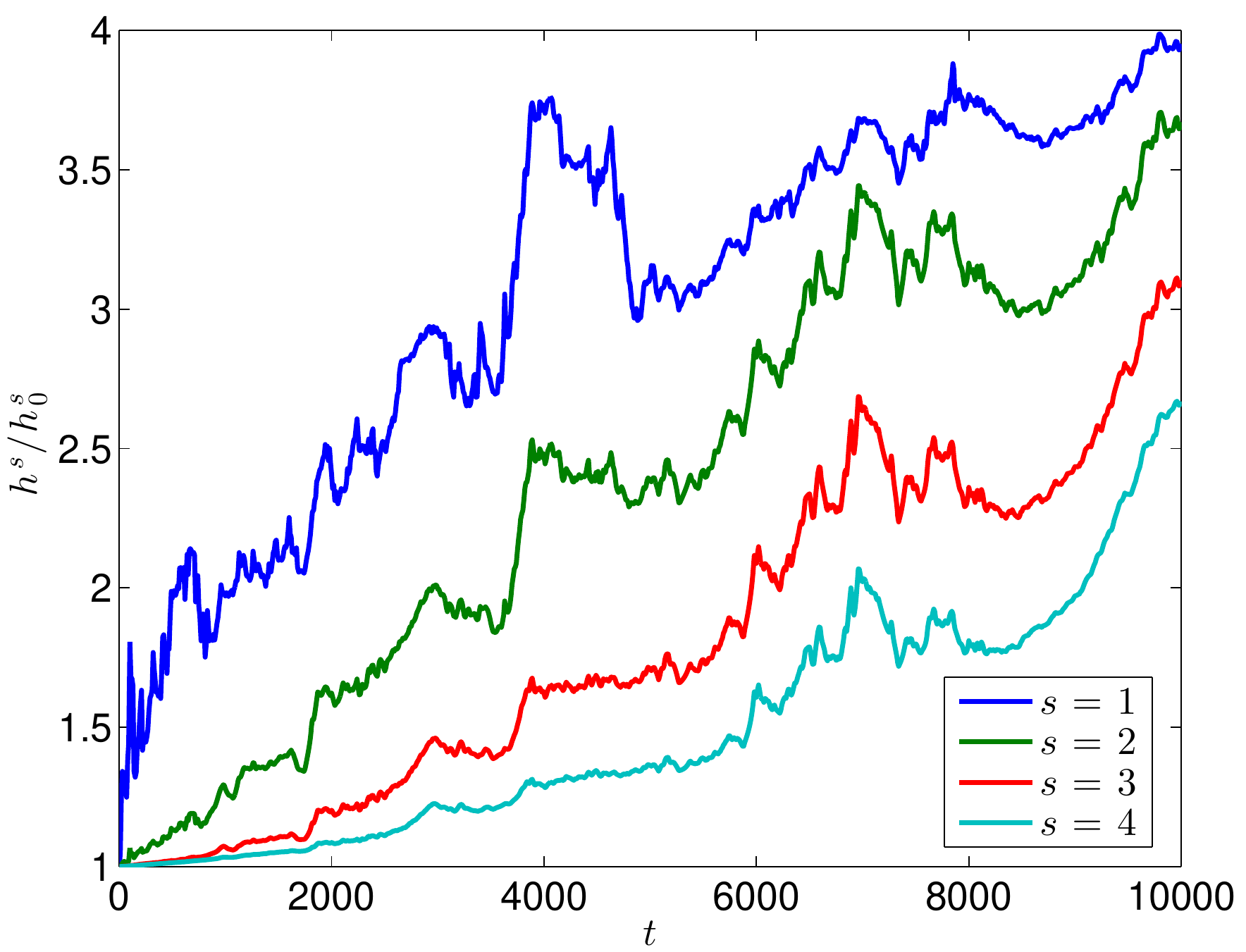}
  \caption{Growth in the $h^s$ norms for the dynamics of Figure
    \ref{f:weightedshock}.}
  \label{f:weightedshock_norms}
\end{figure}

\section{Continuum Limit \& Compacton Type Solutions}
As a final observation, let us introduce the parameter $0<h\ll 1$,
such that
\begin{equation}
  B(x_j,t) = b_j(t),\quad x_j = h  j.
\end{equation}
Taylor expanding,
\begin{equation}
  - i\dt B = \bracket{3 B^2 + 4h^2 \paren{ (\dx B)^2  + B \partial_x^2 B}
    + \bigo(h^4)}\overline{B}.
\end{equation}
Neglecting $\bigo(h^4)$ terms,
\begin{equation}
  \label{e:continuum}
  - i\dt B = 3 \abs{B}^2B + 4h^2 \overline{B}\dx\paren{ B \dx B}.
\end{equation}
This retains the toy model scaling that if $B(x,t)$ is a solution,
then so is $\lambda B(x, \lambda^2 t)$.  It is also invariant to
multiplication by an arbitrary phase.

The equation \eqref{e:continuum} is, formally, degenerately
dispersive, and it can be viewed as an NLS analog of the compacton
equations,
\cite{Rosenau:1993uy,Rosenau:1994ua,Rosenau:2005tx,Rosenau:2010wx}.
One of the interesting features of the compacton equations is that
they admit compactly supported nonlinear bound states, which we now
seek for \eqref{e:continuum}.  We begin with the ansatz $B = e^{it}
Q(x)$, $Q > 0$.  Consequently, $Q$ solves
\begin{equation}
  Q = 3 Q^3 + 4 h^2 Q (Q Q')'
\end{equation}
which can be expressed as
\begin{equation}
  \label{e:compacton}
  Q = 3 Q^3 + 2 h^2 Q (Q^2)''
\end{equation}
Letting $U = Q^2$,
\begin{equation}
  2 h^2 U''  + 3 U -1 = 0
\end{equation}
This can be integrated up once to
\begin{equation}
  h^2 U'^2 + {\frac{3}{2}U^2-U} = C
\end{equation}
$h$ can easily be scaled out by changing the dependent variable, thus
we set $h=1$.  We always have a potential well in this equation, so
there should be homoclinic orbits.

If $C=0$, then $ 0 < U < 2/3$.  The explicit solution is
\begin{equation}
  Q(x) = \sqrt{\frac{2}{3}}\sin\left(\frac{1}{2}\sqrt{\frac{3}{2}} x \right).
\end{equation}
Putting $h$ in,
\begin{equation}
  Q_h(x) = \sqrt{\frac{2}{3}}\sin\left(h^{-1}\frac{1}{2}\sqrt{\frac{3}{2}} x \right).
\end{equation}
Phase shifting the solution by $\pi/2$, we can alternatively have
\begin{equation}
  Q_h(x) = \sqrt{\frac{2}{3}}\cos\left(h^{-1}\frac{1}{2}\sqrt{\frac{3}{2}} x \right).
\end{equation}

We can turn this into a compact structure if we now define
\begin{equation}
  \label{e:compacton_cos}
  Q_h^c(x)
  =\begin{cases}\sqrt{\frac{2}{3}}\cos\left(h^{-1}\frac{1}{2}\sqrt{\frac{3}{2}}
      x \right) & \abs{x} < h \pi \sqrt{\frac{2}{3}}\\
    0 & \abs{x} \geq h \pi \sqrt{\frac{2}{3}}.
  \end{cases}
\end{equation}
This structure, with $h =1$, appears in Figure \ref{f:compacton}.

\begin{figure}
  \includegraphics[width=8cm]{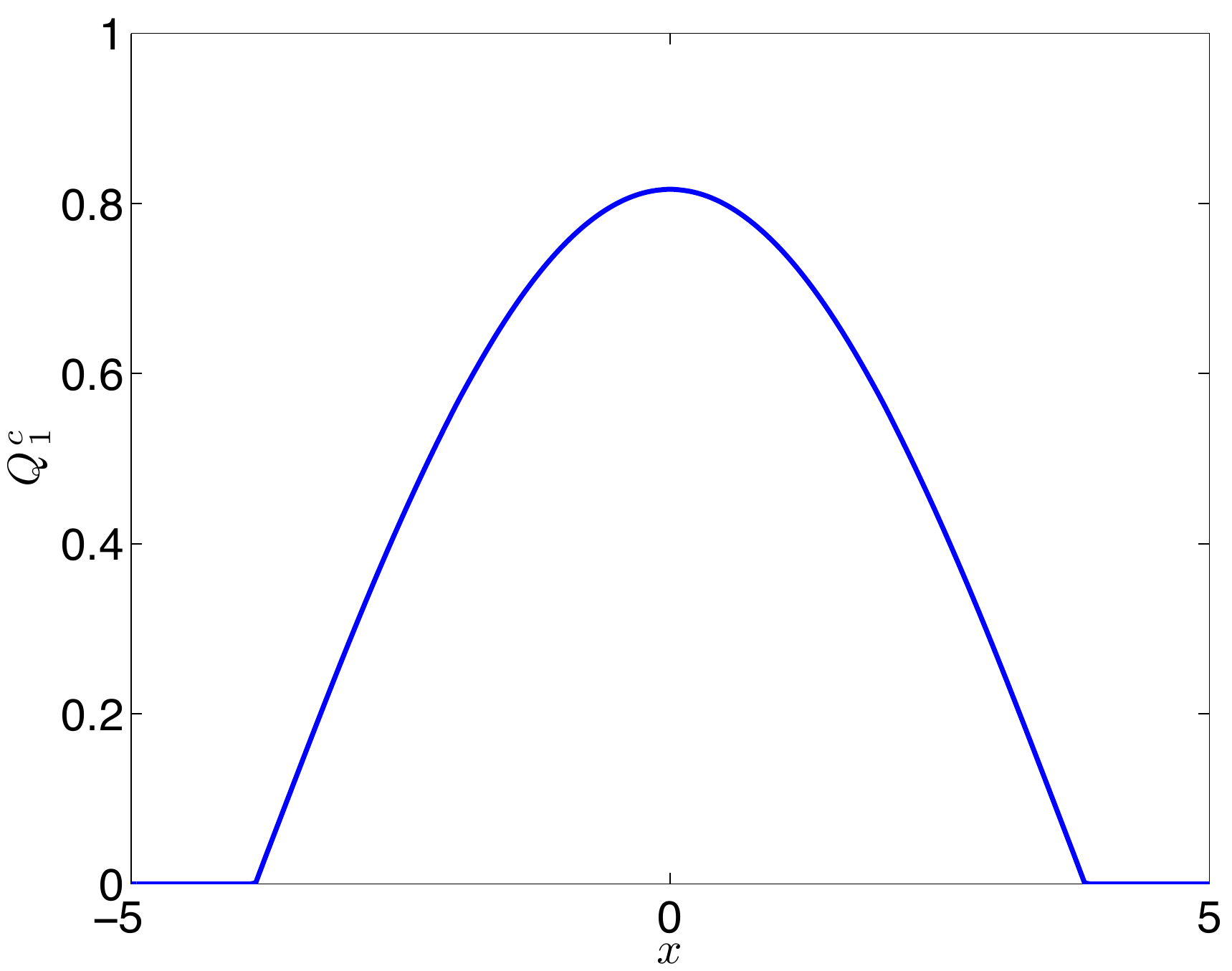}
  \caption{The compacton solution given by \eqref{e:compacton_cos}.}
  \label{f:compacton}
\end{figure}

Note that this will satisfy the equation in the strong sense.  If we
go back to the sine formulation and look at $x=0$ with $h=1$, then
\begin{equation*}
  Q^c(x)
  =\begin{cases}\sqrt{\frac{2}{3}}\sin\left(\frac{1}{2}\sqrt{\frac{3}{2}}
      x \right) & 0< x < \pi \sqrt{\frac{2}{3}} \\
    0 & \textrm{otherwise}.
  \end{cases}
\end{equation*}
Then, near $x=0$,
\begin{subequations}
  \begin{align}
    Q^c(x) &\sim x H(x)\\
    (Q^c(x))^2 &\sim x^2 H(x)\\
    \frac{d^2}{dx^2} (Q^c(x))^2 & \sim H(x)\\
    Q^c(x) \frac{d^2}{dx^2} (Q^c(x))^2 & \sim x H(x),
  \end{align}
\end{subequations}
where $H(x)$ is the Heaviside function.  Hence, the most degenerate
term in \eqref{e:compacton} is continuous.

\bibliographystyle{abbrv}

\bibliography{ToyModel}

\end{document}